\def\section{\@startsection{section}{1}%
  \z@{.7\linespacing\@plus\linespacing}{.5\linespacing}%
  {\normalfont\scshape\centering}}
\def\subsection{\@startsection{subsection}{2}%
  \z@{.5\linespacing\@plus.7\linespacing}{-.5em}%
  {\normalfont\bfseries}}
\titleformat*{\section}{\large\bfseries}
\titleformat*{\subsection}{\large\bfseries}
\newtheorem{theorem}{Theorem}[section]
\newtheorem{lemma}{Lemma}[section]
\theoremstyle{remark}
\newtheorem{rem}{Remark}
\title[The Discrete Mean of $Z^{(j)}(t)$]
{On the Discrete Mean of the higher Derivative of Hardy's $Z$-Function}
\author{Hirotaka Kobayashi}
\date{}
\address{Graduate School of Mathematics, Nagoya University, Furocho, Chikusaku, Nagoya 464-8602, Japan}
\email{m17011z@math.nagoya-u.ac.jp}
\begin{document}

\maketitle

\begin{abstract}
Y\i ld\i r\i m obtained an asymptotic formula of the discrete moment of $|\zeta(\frac{1}{2}+it)|$ over the zero of the higher derivatives of Hardy's $Z$-function.
We give a generalization of his result on Hardy's $Z$-function.
\end{abstract}

\section{Introduction}
Hardy's $Z$-function is defined as
\begin{equation}
Z(t)=\chi\left(\frac{1}{2}+it \right)^{-\frac{1}{2}}\zeta\left(\frac{1}{2}+it\right),
\end{equation}
where $\chi(s)=2^s\pi^{s-1}\sin(\pi s/2)\Gamma(1-s)$ which comes from the functional equation for $\zeta(s)$.
In this paper, we discuss the mean value of $Z^{(j)}(t)$ over the zeros of $Z^{(k)}(t)$,
where $Z^{(j)}(t)$ is the $j$-th derivative of Hardy's $Z$-function.
We denote the complex variable by $s=\sigma+it$ with $\sigma, t \in \mathbb{R}$.
Throughout this article, we assume that the Riemann Hypothesis (RH) is true.

In 1985, Conrey and Ghosh \cite{C&G} showed that 
\begin{equation*}
\sum_{0 < \gamma \leq T} \max_{\gamma < t \leq \gamma^{+}} \left|\zeta \left(\frac{1}{2}+it \right) \right|^2
\sim \frac{e^2-5}{4\pi}TL^2,
\end{equation*}
where $\gamma$ and $\gamma^{+}$ are ordinates of consecutive zeros of $\zeta(s)$ and $L=\log \frac{T}{2\pi}$.
It is known that the zeros of the derivative of Hardy's $Z$-function are interlaced with those of Hardy's $Z$-function.
Thus this summand means the extremal value of Hardy's $Z$-function.
They calculated the integral
\begin{equation*}
\frac{1}{2\pi i}\int_{C} \frac{Z_1'}{Z_1}(s)\zeta(s)\zeta(1-s)ds,
\end{equation*}
where $C$ is positively oriented rectangular path with vertices $c+i$, $c+iT$, $1-c+iT$ and $1-c+i$ where $c=\frac{5}{8}$, and $Z_1(s)$ is defined by
\begin{equation*}
Z_1(s) :=\zeta'(s)-\frac{1}{2}\omega(s)\zeta(s),
\end{equation*}
where
\begin{equation*}
\omega(s)=\frac{\chi'}{\chi}(s)=\log 2\pi+\frac{\Gamma'}{\Gamma}(s)-\frac{\pi}{2}\tan \frac{\pi s}{2}.
\end{equation*}
Indeed, we can see that $|Z_1(\frac{1}{2}+it)|=|Z'(t)|$.

On the other hand, Y\i ld\i r\i m considered a generalization.
He gave an asymptotic formula
\begin{equation*}
\sum_{\gamma_k\leq T}\left|\zeta \left(\frac{1}{2}+i \gamma_k\right) \right|^2\sim
\begin{cases}
\frac{TL^2}{2\pi}(1+\frac{1}{k}+O(\frac{\log k}{k^2})) & (k \ \text{odd and} \ k>1) 
\vspace{2mm}\\
\frac{TL^2}{2\pi}(1-\frac{3}{k}+O(\frac{\log k}{k^2})) & (k \ \text{even}),
\end{cases}
\end{equation*}
where $\gamma_k$ runs over the zeros of the $k$th derivative of Hardy's $Z$-function.
He consider the logarithmic derivative of $\mathscr{Z}_k(s,T)=\left(\frac{L}{2}+\frac{d}{ds} \right)^k\zeta(s)$.

In this article, we will prove the following theorem.
\begin{theorem}
Let $j$ and $k$ be fixed non-negative integers. If RH is true, then as $T\rightarrow \infty$,
\begin{align*}
&\quad \sum_{0<\gamma_k\leq T}\left|Z^{(j)}(\gamma_k) \right|^2 \\
&=\delta_{0,k}\frac{T}{2^{2j+1}(2j+1)\pi}\left(\log \frac{T}{2\pi}\right)^{2j+2} \\
&-\frac{(k+1)\{1+(-1)^j \}}{2^{2j+1}(j+1)^2}\frac{T}{2\pi}\left(\log \frac{T}{2\pi} \right)^{2j+2} \\
&\quad +\sum_{u=1}^{j}\frac{1}{2j+1-u}\frac{j!}{(j-u)!}(-1)^{-u}\sum_{g=1}^{k}\frac{1}{\theta_g^{u+1}}\frac{T}{2^{2j+1}\pi}\left(\log \frac{T}{2\pi} \right)^{2j+2} \\
&\quad +(-1)^{j+1}\sum_{g=1}^{k}\frac{(j!)^2}{\theta_g^{2j+2}}\frac{T}{2^{2j+2}\pi}\left(\log \frac{T}{2\pi} \right)^{2j+2} \\
&\quad +(-1)^{j}(j!)^2\sum_{g=1}^{k}\frac{\left(\frac{T}{2\pi}\right)^{z_g-1}}{\theta_g^{2j+2}} \left(\sum_{\mu=0}^{j}\frac{\theta_g^{\mu}}{\mu !} \right)^2\frac{T}{2^{2j+2}\pi}\left(\log \frac{T}{2\pi} \right)^{2j+2} \\
&\quad+O_{j,k}\left(T(\log T)^{2j+1}\right),
\end{align*}
where $\delta_{0,k}$ is Kronecker's delta, $z_g \ (g=1,2,\cdots,k)$ are the zeros of $\mathscr{Z}_k(s,T)$ with $z_g=1-\frac{2}{L}\theta_g+O(\frac{1}{L^2})$, and $\theta_g$ satisfies $\sum_{\mu=0}^{k}\frac{\theta_g^{\mu}}{\mu !}=0$.
When $j=0$ or $k=0$, we consider the sums on the right-hand side as the empty sums.
\end{theorem}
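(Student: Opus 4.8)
The plan is to follow the contour-integral method of Conrey--Ghosh and Y\i ld\i r\i m. The first step is to manufacture a holomorphic surrogate for $|Z^{(j)}(t)|^{2}$. Writing $s=\tfrac12+it$ and $\tfrac{d}{dt}=i\tfrac{d}{ds}$, one checks directly that
\[
Z^{(j)}(t)=i^{j}\chi(s)^{-1/2}Z_j(s),\qquad Z_j(s):=\chi(s)^{1/2}\frac{d^{j}}{ds^{j}}\!\left(\chi(s)^{-1/2}\zeta(s)\right),
\]
so that $Z_1$ is exactly the function of the excerpt and $|Z^{(j)}(t)|=|Z_j(\tfrac12+it)|$. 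Since $Z^{(j)}(t)$ is real, conjugating and using $\chi(s)\chi(1-s)=1$ together with Schwarz reflection yields the key identity $|Z^{(j)}(t)|^{2}=Z_j(s)Z_j(1-s)$. Because the ordinates $\gamma_k$ are precisely the zeros of $Z_k(\tfrac12+it)$, the argument principle (using RH to ensure the only zeros inside the contour are on the line) gives
\[
\sum_{0<\gamma_k\le T}\bigl|Z^{(j)}(\gamma_k)\bigr|^{2}=\frac{1}{2\pi i}\oint_{C}\frac{Z_k'}{Z_k}(s)\,Z_j(s)Z_j(1-s)\,ds,
\]
with $C$ the narrow rectangle of the excerpt around $[\tfrac12+i,\tfrac12+iT]$; the horizontal sides contribute a negligible amount and are discarded.

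Second, I would replace the transcendental factor $\omega(s)=\chi'/\chi(s)$ by the constant $-L$. Since $\omega(\tfrac12+it)=-\log\tfrac{t}{2\pi}+O(1/t)$, each $Z_m(s)$ is well approximated in the relevant region by the model $\mathscr{Z}_m(s,T)=\left(\tfrac L2+\tfrac{d}{ds}\right)^m\zeta(s)$, and I would show the total replacement error is $O_{j,k}\!\left(T(\log T)^{2j+1}\right)$, exactly the size of the claimed error. A Laurent expansion at the pole of $\zeta$ then gives
\[
\mathscr{Z}_m(s,T)=(-1)^{m}m!\,(s-1)^{-m-1}\sum_{\mu=0}^{m}\frac{\bigl(-\tfrac{L}{2}(s-1)\bigr)^{\mu}}{\mu!}+\cdots,
\]
which exhibits the pole of order $k+1$ at $s=1$ and, for the factor $\mathscr{Z}_k$, the $k$ nearby zeros $z_g=1-\tfrac2L\theta_g+O(L^{-2})$ with $\sum_{\mu=0}^{k}\theta_g^{\mu}/\mu!=0$; equivalently $\frac{\mathscr{Z}_k'}{\mathscr{Z}_k}(s,T)=-\frac{k+1}{s-1}+\sum_{g=1}^{k}\frac{1}{s-z_g}+\cdots$ near $s=1$. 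The same truncated-exponential shape of $\mathscr{Z}_j$ near $s=1$ is what produces the factors $\sum_{\mu=0}^{j}\theta_g^{\mu}/\mu!$ and the powers $\theta_g^{-u-1},\ \theta_g^{-2j-2}$ appearing in the statement.

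Third comes the evaluation of the integral, which I expect to be the main obstacle. Using the functional equation $\zeta(1-s)=\chi(1-s)\zeta(s)$ to fold the left edge onto the right, the factor $\chi(1-s)\sim(\tfrac{t}{2\pi})^{s-1/2}e^{-i(t+\pi/4)}$ turns the vertical integral into a mean value; inserting the Dirichlet expansions of $\tfrac{\mathscr{Z}_k'}{\mathscr{Z}_k}$ and of $\mathscr{Z}_j$ and estimating the resulting oscillatory integrals by the first-derivative (stationary-phase) method isolates a diagonal main term of order $TL^{2j+2}$, whose coefficient $\tfrac{1}{2^{2j+1}(2j+1)}$ comes from an elementary $\int_0^1 x^{2j}\,dx$-type evaluation. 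The delicate point is the simultaneous bookkeeping of the local data at $s=1$: the pole of order $k+1$ supplies the factor $k+1$ in the second term, while each zero $z_g$, through the pole of $\tfrac{\mathscr{Z}_k'}{\mathscr{Z}_k}$ at $z_g$ played against the $(j+1)$-fold pole of $\mathscr{Z}_j$ and the $\chi$-factor, supplies the $\theta_g$-sums together with $(\tfrac{T}{2\pi})^{z_g-1}=e^{-2\theta_g}(1+O(1/L))$. I expect the diagonal main term to combine with these local contributions so that the leading coefficient survives only when $k=0$, thereby accounting for the Kronecker delta.

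Finally I would assemble the pieces, verify the algebra against the degenerate cases (for $j=k=0$ the first and second terms cancel, consistent with $Z(\gamma)=0$; for $j=0,\,k=1$ one recovers $\frac{e^{2}-5}{4\pi}$ from $\theta_1=-1$), and absorb all error contributions---from the horizontal sides, from the passage $Z_m\to\mathscr{Z}_m$, from the off-diagonal terms, and from the $O(L^{-2})$ in $z_g$---into $O_{j,k}\!\left(T(\log T)^{2j+1}\right)$. The hardest step is unquestionably the third: matching the $(k+1)$-fold pole of $\mathscr{Z}_k$, the $(j+1)$-fold pole of $\mathscr{Z}_j$, the truncated-exponential zeros $z_g$, and the oscillatory $\chi$-factor to extract the exact rational constants and the precise cancellation yielding the $\delta_{0,k}$.
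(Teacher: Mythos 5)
Your skeleton is the one the paper actually uses --- the surrogate $Z_j(s)$ with $|Z^{(j)}(t)|^2=Z_j(s)Z_j(1-s)$, the argument-principle integral against $\frac{Z_k'}{Z_k}$, folding the left edge via the functional equation, the local data of $\mathscr{Z}_k(s,T)$ at $s=1$ and at the $z_g$, and a Gonek-type evaluation (the paper makes your ``stationary phase'' step precise through Gonek's lemma, Perron's formula and the residue theorem). But two of your steps have genuine gaps. The first is fatal as stated: you work on the full rectangle around $[\tfrac12+i,\tfrac12+iT]$ and replace every $Z_m$ by the single model $\mathscr{Z}_m(s,T)$, claiming a total replacement error $O_{j,k}(T(\log T)^{2j+1})$. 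The approximation $\frac{Z_k'}{Z_k}(s)-\frac{\mathscr{Z}_k'}{\mathscr{Z}_k}(s,T)\ll \frac{U}{T}$ (Y\i ld\i r\i m's lemma, Lemma \ref{app-log-der}) holds only for $T\le t\le T+U\le 2T$: the model freezes $\omega(s)\approx-\log\frac{t}{2\pi}$ at the constant $-L$, so for $t$ of size $T/2$, let alone $T^{\alpha}$ with $\alpha<1$, the two functions and the positions of their zeros genuinely differ and the discrepancy of the log-derivatives is not small. Since the folded integrand on $\sigma=\tfrac58$ carries $|\chi(1-s)|\asymp(t/2\pi)^{1/8}$, the replacement error over the whole range is only bounded by something of order $T^{9/8+\varepsilon}$, which swamps the main term $TL^{2j+2}$. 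This is precisely why the paper proves an increment formula $M(T+U)-M(T)$ with $U=T^{3/4}$, at special heights $T_r$ chosen via the zero-counting lemma (which is also what justifies discarding the horizontal sides --- a point you assert without argument), and only afterwards removes the restriction on $T$.

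The second gap concerns where the leading coefficient and the Kronecker delta actually come from. The term $\frac{1}{2^{2j+1}(2j+1)}TL^{2j+2}$ is not a diagonal term of the folded integral $I_1$: it is the $\frac{\chi'}{\chi}$ piece of the left edge, i.e.\ the continuous mean value $\frac{1}{2\pi}\int_T^{T+U}\log\frac{t}{2\pi}\,Z^{(j)}(t)^2\,dt$, which the paper evaluates by quoting Hall's formula with the sharpened error of Minamide and Tanigawa --- and the paper stresses that Hall's own error term is \emph{not} sufficient for this short-interval differencing. Your plan neither invokes this input nor offers a substitute derivation of a short-interval mean value. For $k\ge1$ this term is then cancelled \emph{exactly} by part of the residue contribution at the $z_g$ inside $I_1$: after a nontrivial binomial computation (the paper's Lemma \ref{combi} and the four sums $S_1,\dots,S_4$), the $u=0$ piece equals $(-1)^j\frac{2}{2j+1}\sum_{g=1}^k\theta_g^{-1}$, and the Newton--Girard identity $\sum_{g=1}^k\theta_g^{-1}=-1$ makes $2\Re I_1$ contribute $-\frac{1}{2^{2j+1}(2j+1)}UL^{2j+2}$, killing the mean-value term; when $k=0$ that sum is empty and the term survives. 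Your sketch replaces all of this with ``I expect the diagonal main term to combine with these local contributions so that the leading coefficient survives only when $k=0$,'' but that cancellation, together with the exact rational constants in the other terms, is the heart of the theorem, and nothing in the proposal produces it.
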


At the last main term, since $L=\log \frac{T}{2\pi}$, we see that
\begin{equation*}
\left(\frac{T}{2\pi} \right)^{z_g-1}=e^{-2\theta_g+O(\frac{1}{L})}.
\end{equation*}
Therefore we can write the approximate formula in the form $C_{j,k}TL^{2j+2}$.

\begin{rem}
Matsuoka \cite{M} proved that the zeros of $Z^{(k+1)}(t)$ are interlaced with those of $Z^{(k)}(t)$ for sufficiently large $t$.
Therefore our sum contains the mean square of the extremal value of $|Z^{(j)}(t)|$.
\end{rem}

\begin{rem}
When $k=2$, it is not clear whether the coefficient of Y\i ld\i r\i m's asymptotic formula is positive or negative, hence his result does not give precise information, and our main theorem too.
This is because we have no exact information on the location of zeros of $\mathscr{Z}_2(s,T)$ near $s=1$.
In general, it is difficult to confirm even $C_{j,k}\geq 0$.
However, we can verify $C_{k,k}=0$ because it is known that
\begin{equation*}
\sum_{g=1}^{k}\frac{1}{\theta_g^{2k+2}}=\frac{(-1)^{k+1}+1}{k!(k+1)!}
\end{equation*}
and
\begin{equation*}
\sum_{g=1}^{k}\frac{1}{\theta_g^u}=
\begin{cases}
-1 & (u=1), \\
0  & (2\leq u\leq k), \\
\frac{1}{k!} & (u=k+1)
\end{cases}
\end{equation*}
(see \cite{Y-0}).
\end{rem}

In the proof, we apply a continuous mean value which asserts, for each $j=0, 1, 2, \cdots$, and any sufficiently large $T$,
\begin{equation}
\int_{0}^{T} Z^{(j)}(t)^2dt
=\frac{1}{4^j(2j+1)}TP_{2j+1}\left(\log \frac{T}{2\pi}\right)+error,
\end{equation}
where $P_{2j+1}(x)$ is the monic polynomial of degree $2j+1$ given by
\begin{equation*}
P_{2j+1}(x)=W_{2j+1}(x)+(4j+2)\sum_{n=0}^{2j} \binom {2j}{n}(-2)^n c_n W_{2j-n}(x),
\end{equation*}
in which
\begin{equation}\label{c_h}
W_{g}(v)=\frac{1}{e^v}\int_{0}^{e^v}(\log u)^g du,
\quad \zeta(s)=\frac{1}{s-1}+ \sum_{n=0}^{\infty}\frac{(-1)^n c_n}{n!}(s-1)^n.
\end{equation}
The $c_n$ are called the Stieltjes constants.
It is Hall \cite{Hall} who showed this result first with the error $\ll T^{\frac{3}{4}}(\log T)^{2j+\frac{1}{2}}$.
Later, Minamide and Tanigawa \cite{Mi-T} improved the error to $O(T^{\frac{1}{2}}(\log T)^{2j+1})$.
For our proof, Hall's error is not sufficient and we need the estimate of Minamide and Tanigawa.

\section{Preliminaries}

We introduce a meromorphic function associated with $k$-th derivatives of Hardy's $Z$-function. Let $Z_0(s)=\zeta(s)$, and for $k\geq 1$, we define $Z_k(s)$ as
\begin{equation}
Z_k(s)=Z_{k-1}'(s)-\frac{1}{2}\omega(s)Z_{k-1}(s).
\end{equation}

Furthermore let $f_0(s)=1$, and we define $f_k(s)$, for $k\geq 1$, as
\begin{equation*}
f_k(s)=f'_{k-1}(s)-\frac{1}{2}\omega(s)f_{k-1}(s).
\end{equation*}
Then we can see that
\begin{equation}\label{Z_k-binom}
Z_k(s)=\sum_{\mu=0}^{k}\binom{k}{\mu}f_{k-\mu}(s)\zeta^{(\mu)}(s)
\end{equation}
by induction on $k$. Actually,

\begin{align*}
&\quad
Z_{k+1}(s) \\
&=
Z_k'(s)-\frac{1}{2}\omega(s)Z_k(s) \\
&=
\sum_{\mu=0}^{k}\binom{k}{\mu}f_{k-\mu}'(s)\zeta^{(\mu)}(s)+\sum_{\mu=0}^{k}\binom{k}{\mu}f_{k-\mu}(s)\zeta^{(\mu+1)}(s) \\
&\quad
-\frac{1}{2}\omega(s)\sum_{\mu=0}^{k}\binom{k}{\mu}f_{k-\mu}(s)\zeta^{(\mu)}(s) \\
&=
\sum_{\mu=0}^{k}\binom{k}{\mu}f_{k+1-\mu}(s)\zeta^{(\mu)}(s)+\sum_{\mu=0}^{k}\binom{k}{\mu}f_{k-\mu}(s)\zeta^{(\mu+1)}(s) \\
&=
f_{k+1}(s)\zeta(s)+\sum_{\mu=1}^{k}\left \{ \binom{k}{\mu}+\binom{k}{\mu-1} \right \}f_{k+1-\mu}(s)\zeta^{(\mu)}(s)+\zeta^{(k+1)}(s) \\
&=
\sum_{\mu=0}^{k+1}\binom{k+1}{\mu}f_{k+1-\mu}(s)\zeta^{(\mu)}(s).
\end{align*}
To show the last equality, we use

\begin{equation*}
\binom{k}{\mu}+\binom{k}{\mu-1}=\binom{k+1}{\mu}.
\end{equation*}

\begin{rem}
It is Matsuoka \cite{M} who defined $Z_k(s)$ and $f_k(s)$ as above, and he use the notation $f_k(s)$ as $Z_k(s)$ and does $h_k(s)$ as $f_k(s)$.
However, Y\i ld\i r\i m gave another definition of $Z_k(s)$, namely
\begin{equation*}
Z_k(s)=(\chi(s))^{\frac{1}{2}}\frac{d^k}{ds^k}((\chi(s))^{-\frac{1}{2}}\zeta(s)).
\end{equation*}
It is possible to see that this definition coincides with our definition.
The representation (\ref{Z_k-binom}) is inspired by the work of Matsumoto and Tanigawa \cite{M-T}.
\end{rem}

We give some properties of $Z_k(s)$.
\begin{lemma}\label{Z_k-fund}
For $k=0,1,2,\cdots$, $Z_k(s)$ has the following properties.
\begin{enumerate}
\item $Z^{(k)}(t)=i^k\chi \left(\frac{1}{2}+it \right)^{-\frac{1}{2}}Z_k\left(\frac{1}{2}+it \right)$.
\vspace{2.5mm}
\item $Z_k(s)$ satisfies the functional equation
\begin{equation}\label{fe-Z_k}
Z_k(s)=(-1)^k\chi(s)Z_k(1-s)
\end{equation}
for all $s$.
\end{enumerate}
\end{lemma}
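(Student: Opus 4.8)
The plan is to prove both parts by induction on $k$, using the recursion $Z_k(s)=Z_{k-1}'(s)-\frac{1}{2}\omega(s)Z_{k-1}(s)$ to pass from $k$ to $k+1$ in each case. Two elementary facts will drive the computations: the identity $\chi'(s)=\omega(s)\chi(s)$, which is immediate from $\omega=\chi'/\chi$, and the symmetry $\omega(s)=\omega(1-s)$.

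For (i), the base case $k=0$ is just the definition $Z(t)=\chi\left(\frac{1}{2}+it\right)^{-\frac{1}{2}}\zeta\left(\frac{1}{2}+it\right)$. For the inductive step I would set $s=\frac{1}{2}+it$, so that $\frac{d}{dt}=i\frac{d}{ds}$, and differentiate the hypothesis $Z^{(k)}(t)=i^k\chi(s)^{-\frac{1}{2}}Z_k(s)$ in $t$. Since $\frac{d}{ds}\chi(s)^{-\frac{1}{2}}=-\frac{1}{2}\chi(s)^{-\frac{1}{2}}\omega(s)$, the product rule gives
\begin{equation*}
Z^{(k+1)}(t)=i^{k+1}\chi(s)^{-\frac{1}{2}}\left(Z_k'(s)-\frac{1}{2}\omega(s)Z_k(s)\right)=i^{k+1}\chi(s)^{-\frac{1}{2}}Z_{k+1}(s),
\end{equation*}
which is exactly the recursion defining $Z_{k+1}$. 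So the step closes with no further input.

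For (ii), the base case $k=0$ is the functional equation $\zeta(s)=\chi(s)\zeta(1-s)$. Assuming $Z_k(s)=(-1)^k\chi(s)Z_k(1-s)$, I would differentiate in $s$ to get $Z_k'(s)=(-1)^k\left(\chi'(s)Z_k(1-s)-\chi(s)Z_k'(1-s)\right)$ and substitute this, together with the hypothesis, into $Z_{k+1}(s)=Z_k'(s)-\frac{1}{2}\omega(s)Z_k(s)$. Replacing $\chi'(s)$ by $\omega(s)\chi(s)$ and collecting the $Z_k(1-s)$ terms yields
\begin{equation*}
Z_{k+1}(s)=(-1)^k\chi(s)\left(\frac{1}{2}\omega(s)Z_k(1-s)-Z_k'(1-s)\right).
\end{equation*}

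The crux of the argument, and the step I expect to demand the most care, is recognizing that this equals $(-1)^{k+1}\chi(s)Z_{k+1}(1-s)$. This is precisely where $\omega(s)=\omega(1-s)$ enters: differentiating the standard relation $\chi(s)\chi(1-s)=1$ (which follows from applying the $\zeta$ functional equation twice) gives $\omega(s)-\omega(1-s)=0$. Substituting $\omega(s)=\omega(1-s)$ into the bracket turns it into $-\left(Z_k'(1-s)-\frac{1}{2}\omega(1-s)Z_k(1-s)\right)=-Z_{k+1}(1-s)$, which completes the induction and produces the alternating sign $(-1)^{k+1}$. I would isolate the verification of $\chi(s)\chi(1-s)=1$, and hence $\omega(s)=\omega(1-s)$, beforehand, since it is the only non-formal ingredient and is exactly what makes the sign $(-1)^k$ propagate correctly through the recursion.
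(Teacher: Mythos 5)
Your proof is correct and follows essentially the same route as the paper's: induction on $k$ via the recursion $Z_{k+1}=Z_k'-\frac{1}{2}\omega Z_k$, differentiating the inductive hypothesis in each part. The only difference is that you make explicit the identity $\omega(s)=\omega(1-s)$ (derived from $\chi(s)\chi(1-s)=1$), which the paper's final step in (ii) uses silently when it identifies $Z_k'(1-s)-\frac{1}{2}\omega(s)Z_k(1-s)$ with $Z_{k+1}(1-s)$ --- a worthwhile clarification, but not a different argument.
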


\begin{proof}
When $k=0$, all the statements are trivial.
For $k=1$, see the proof of the lemma in \cite{C&G}. By induction, we can prove the both of two statements for $k\geq 1$.
In fact, when we derivate the equation in (i), we see that

\begin{align*}
&\quad
Z^{(k+1)}(t) \\
&=
i^{k+1}\left(\chi \left(\frac{1}{2}+it \right)^{-\frac{1}{2}}Z_k'\left(\frac{1}{2}+it \right) \right. \\
&\quad
\left. -\frac{1}{2}\chi'\left(\frac{1}{2}+it \right)\chi \left(\frac{1}{2}+it \right)^{-\frac{3}{2}}Z_k\left(\frac{1}{2}+it \right) \right) \\
&=
i^{k+1}\chi \left(\frac{1}{2}+it \right)^{-\frac{1}{2}}\left(Z_k'\left(\frac{1}{2}+it \right)-\frac{1}{2}\omega \left(\frac{1}{2}+it \right)Z_k\left(\frac{1}{2}+it \right) \right) \\
&=
i^{k+1}\chi \left(\frac{1}{2}+it \right)^{-\frac{1}{2}}Z_{k+1}\left(\frac{1}{2}+it \right).
\end{align*}

And by the definition, we have
\begin{align*}
&\quad
Z_{k+1}(s) \\
&=
Z_k'(s)-\frac{1}{2}\omega(s)Z_k(s) \\
&=(-1)^k\chi'(s)Z_k(1-s)-(-1)^k\chi(s)Z_k'(1-s) \\
&\quad
-\frac{1}{2}\omega(s)(-1)^k\chi(s)Z_k(1-s) \\
&=
(-1)^{k+1}\chi(s)\left(-\omega(s)Z_k(1-s)+Z_k'(1-s)+\frac{1}{2}\omega(s)Z_k(1-s)\right) \\
&=
(-1)^{k+1}\chi(s)Z_{k+1}(1-s).
\end{align*}
\end{proof}

This lemma is proved by Matsuoka \cite{M}, but we reproduce the proof here because \cite{M} is unpublished.

There are some results on the zeros of higher derivatives of Hardy's $Z$-function.
Let $N(T,Z_k)$ be the number of zeros of $Z_k(s)$ in the region
$\{ s=\sigma+it \mid 1-2m<\sigma<2m, 0\leq t\leq T \}$,
where $m=m(k)$ is a sufficiently large positive integer.
\begin{lemma}\label{M-T}
For any non-negative $k$ we have
\begin{equation*}
N(T,Z_k)=\frac{T}{2\pi}\log \frac{T}{2\pi}-\frac{T}{2\pi}+O_k(\log T).
\end{equation*}
\end{lemma}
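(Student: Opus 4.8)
The plan is to count the zeros by the argument principle applied to $Z_k(s)$ around the boundary of the rectangle $R=\{1-2m\le\sigma\le 2m,\ 0\le t\le T\}$, extracting the main term from the functional equation \eqref{fe-Z_k} and showing that every other contribution is $O_k(\log T)$. One preliminary point must be settled first: since $\omega(s)=\chi'/\chi(s)$ has poles at the integers, $Z_k(s)$ is meromorphic, and all of its poles lie on the real axis. Hence $Z_k$ is holomorphic in the open strip $\{0<t\le T\}$, and I would run the argument principle on the truncated rectangle whose lower edge is the segment $t=\delta$ for a small fixed $\delta>0$ avoiding the finitely many low-lying zeros; the $O_k(1)$ zeros with $0\le t\le\delta$ are absorbed into the error. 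I would take the vertical edges on $\sigma=2m$ and $\sigma=1-2m$, having fixed $m=m(k)$ large enough that all zeros lie strictly inside (for $|\sigma|$ large $Z_k\approx f_k\ne 0$ by \eqref{Z_k-binom}, and \eqref{fe-Z_k} transports this to the far left). Then $2\pi N(T,Z_k)=\Delta_{\partial R}\arg Z_k(s)$, the sum of the argument variations along the four edges.

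The main term comes from folding the left edge onto the right via \eqref{fe-Z_k}. Writing $Z_k(s)=(-1)^k\chi(s)Z_k(1-s)$ and using that $Z_k$ has real coefficients, so $Z_k(\bar s)=\overline{Z_k(s)}$, a short computation shows that the variation of $\arg Z_k(1-s)$ along the left edge equals the variation of $\arg Z_k$ along the right edge; consequently
\begin{equation*}
2\pi N(T,Z_k)=\Delta_{\mathrm{left}}\arg\chi+2\,\Delta_{\mathrm{right}}\arg Z_k+\Delta_{\mathrm{top}}\arg Z_k+O_k(1).
\end{equation*}
On the left edge $s=(1-2m)+it$ the quantity $\arg\chi(s)$ is evaluated by Stirling's formula for $\Gamma(1-s)$ together with the elementary asymptotics of $\sin(\pi s/2)$; its $t$-dependent part is $-t\log\frac{t}{2\pi}+t+O(\log t)$, so that $\Delta_{\mathrm{left}}\arg\chi=T\log\frac{T}{2\pi}-T+O(\log T)$. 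Dividing by $2\pi$ already produces the asserted main term, and it remains to bound the two $Z_k$-variations.

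For this I would record the behaviour of $Z_k$ high in the strip. For $s=2m+it$ with $t$ large, the digamma term contributes $\frac{\Gamma'}{\Gamma}(s)=\log t+i\frac{\pi}{2}+O(1/t)$ while $-\frac{\pi}{2}\tan\frac{\pi s}{2}\to-i\frac{\pi}{2}$ exponentially; the two imaginary parts cancel, so $\omega(s)=\log(2\pi t)+o(1)$ is essentially real and large. Since $f_k$ is built from $\omega$ and its derivatives with leading term $(-\tfrac12\omega)^k$, and since $\zeta^{(\mu)}(2m+it)$ is uniformly small, \eqref{Z_k-binom} gives $Z_k(2m+it)=\bigl(-\tfrac12\log(2\pi t)\bigr)^k\bigl(1+o(1)\bigr)$, a nearly real number of modulus $\asymp(\log t)^k$. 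This immediately yields $\Delta_{\mathrm{right}}\arg Z_k=O_k(1)$, because $Z_k$ then stays in a fixed sector about the real axis along the whole right edge.

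The remaining step, which I expect to be the main obstacle, is the estimate $\Delta_{\mathrm{top}}\arg Z_k=O_k(\log T)$ along $t=T$, $1-2m\le\sigma\le 2m$. The standard device is to bound the number of zeros of $\mathrm{Re}\,Z_k(\sigma+iT)$ on this segment: setting $\Phi(z)=\tfrac12\bigl(Z_k(z+iT)+Z_k(z-iT)\bigr)$, which is holomorphic and satisfies $\Phi(\sigma)=\mathrm{Re}\,Z_k(\sigma+iT)$ for real $\sigma$, one applies Jensen's formula on a disc centred at $z=2m$ and large enough to cover the segment. This needs two ingredients: a polynomial upper bound $|Z_k(s)|\ll_k T^{C}$ throughout the enlarged strip, which follows from \eqref{Z_k-binom}, the convexity and functional-equation bounds for $\zeta^{(\mu)}$ (needed even for negative $\sigma$, since the disc reaches there) and the growth $f_j(s)\ll(\log T)^{j}$; and the lower bound $|\Phi(2m)|=|\mathrm{Re}\,Z_k(2m+iT)|\gg_k(\log T)^{k}\gg 1$, which is exactly the near-real asymptotic above. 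Jensen's inequality then caps the number of zeros of $\Phi$ in the disc by $O_k(\log T)$, whence $\Delta_{\mathrm{top}}\arg Z_k=O_k(\log T)$. Collecting the four contributions gives $N(T,Z_k)=\frac{T}{2\pi}\log\frac{T}{2\pi}-\frac{T}{2\pi}+O_k(\log T)$; the delicate parts are the uniform asymptotics of $\omega$ and $f_k$ high in the strip and the verification that the zeros are confined to a bounded-width strip so that $m$ can be fixed.
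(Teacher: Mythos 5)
Your argument is sound, but it is not the paper's argument: the paper does not prove this lemma from scratch at all. It quotes Theorem 3 of Matsumoto and Tanigawa \cite{M-T}, which gives exactly this zero count for their auxiliary function $\eta_k(s)$, and then transfers the count to $Z_k(s)$ by verifying the identity $-\frac{1}{2}\omega(s)\eta_k(s)=Z_k(s)$ with a short induction. Your proposal instead reproves the counting theorem directly for $Z_k$ by the Riemann--von Mangoldt method: argument principle on the rectangle, folding the left edge onto the right via the functional equation \eqref{fe-Z_k} together with the reflection $Z_k(\overline{s})=\overline{Z_k(s)}$, the main term from the variation of $\arg\chi$ along the left edge, an $O_k(1)$ bound on the right edge from the near-real asymptotics of $Z_k(2m+it)$ supplied by \eqref{Z_k-binom}, and an $O_k(\log T)$ bound on the top edge via Jensen's formula applied to $\Phi(z)=\frac{1}{2}\bigl(Z_k(z+iT)+Z_k(z-iT)\bigr)$. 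This is essentially how Matsumoto and Tanigawa prove the theorem the paper cites, so your route buys self-containedness (no detour through $\eta_k$, no reliance on \cite{M-T}) at the cost of the technical estimates you correctly flag as load-bearing: polynomial growth of $Z_k$ in a wide strip (needed because the Jensen disc reaches into $\sigma<0$, where one must invoke the functional equation or convexity bounds for $\zeta^{(\mu)}$) and nonvanishing with bounded argument on $\sigma=2m$. The paper's route buys brevity: a one-line algebraic identity replaces the entire contour analysis.

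One slip, inherited from a typo in the paper: your asymptotic $\omega(2m+it)=\log(2\pi t)+o(1)$ contradicts the paper's own estimate \eqref{log-stirling}, which gives $\omega(s)=\frac{\chi'}{\chi}(s)=-\log\frac{t}{2\pi}+O\left(\frac{1}{t}\right)$. The displayed formula for $\omega$ in the introduction has the wrong signs; the correct identity is $\frac{\chi'}{\chi}(s)=\log 2\pi-\frac{\Gamma'}{\Gamma}(s)+\frac{\pi}{2}\tan\frac{\pi s}{2}$, so the imaginary parts still cancel but the real part is $-\log\frac{t}{2\pi}$, not $+\log(2\pi t)$. This does not damage your proof: with the correct sign one gets $Z_k(2m+it)=\bigl(\tfrac{1}{2}\log\tfrac{t}{2\pi}\bigr)^k(1+o(1))$ rather than $\bigl(-\tfrac{1}{2}\log(2\pi t)\bigr)^k(1+o(1))$, which is still essentially real with modulus $\asymp(\log t)^k$, and that is all the right-edge sector argument and the lower bound $|\Phi(2m)|\gg 1$ in the Jensen step require.
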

This is Theorem 3 in \cite{M-T} essentially.
Actually, Matsumoto and Tanigawa proved this statement for $\eta_k(s)$, where
\begin{equation*}
\eta_1(s)=\zeta(s)-\frac{2}{\omega(s)}\zeta'(s)
\end{equation*}
and
\begin{equation*}
\eta_{k+1}(s)=\left(\frac{\omega'}{\omega}(s)-\frac{1}{2}\omega(s) \right)\eta_k(s)+\eta_k'(s).
\end{equation*}
We can see that $-\frac{1}{2}\omega(s)\eta_k(s)=Z_k(s)$ by induction. The case $k=1$ is trivial.
We assume the case $k\geq 1$ is true. Then
\begin{align*}
-\frac{1}{2}\omega(s)\eta_{k+1}(s)
&=
\left(-\frac{1}{2}\omega'(s)+\frac{1}{4}\omega^2(s) \right)\eta_k(s)-\frac{1}{2}\omega(s)\eta'_k(s) \\
&=
-\frac{1}{2}(\omega'(s)\eta_k(s)+\omega(s)\eta'_k(s))-\frac{1}{2}\omega(s)\left(-\frac{1}{2}\omega(s)\eta_k(s) \right) \\
&=
Z_k'(s)-\frac{1}{2}\omega(s)Z_k(s)=Z_{k+1}(s).
\end{align*}

\begin{lemma}[see the proof of Theorem 2 in \cite{M-T}]\label{M-T-1}
Assuming RH, $Z_k(s)$ has at most $O_k(1)$ zeros with ordinates in $(0,T)$ off the critical line.
\end{lemma}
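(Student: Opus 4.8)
The plan is to transfer the problem from $Z_k$ to the function $\eta_k$ via the factorization $Z_k(s)=-\tfrac12\omega(s)\eta_k(s)$ derived above, since the required bound for $\eta_k$ is essentially the content of Theorem~2 of \cite{M-T}. First I would dispose of the extra factor $\omega$. By Stirling's formula one has $|\omega(\sigma+it)|=\log\frac{|t|}{2\pi}+O_k(1)$ uniformly for $1-2m\le\sigma\le 2m$ and $|t|\ge 1$, so there is a constant $T_0=T_0(k)$ with $\omega(s)\neq 0$ whenever $|t|>T_0$; as the poles of $\omega$ lie on the real axis, for $t>T_0$ a point $s$ off the critical line satisfies $Z_k(s)=0$ if and only if $\eta_k(s)=0$. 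The remaining zeros, lying in the bounded box $\{1-2m<\sigma<2m,\ 0<t\le T_0\}$, are finite in number because $Z_k$ is meromorphic there, so they contribute only $O_k(1)$. Hence it suffices to bound the off-line zeros of $Z_k$ (equivalently $\eta_k$) with $t>T_0$.

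For this I would work directly with $Z_k$, first folding the count into the right half-plane by the functional equation~(\ref{fe-Z_k}): since $\chi$ has neither zeros nor poles for $t>T_0$, the off-line zeros are symmetric about $\sigma=\tfrac12$. Fix $\delta>0$. In the region $\sigma\ge\tfrac12+\delta$ the representation~(\ref{Z_k-binom}) shows that $Z_k$ is governed by its leading term: one has $f_{k-\mu}(s)\sim(-\tfrac12\omega(s))^{k-\mu}\asymp(\log|t|)^{k-\mu}$, while under RH $\zeta(s)\neq 0$ and $\zeta^{(\mu)}(s)/\zeta(s)\ll(\log|t|)^{\mu(1-2\delta)+\varepsilon}$, so each term with $\mu\ge 1$ is smaller than the term $\mu=0$ by a factor $\ll(\log|t|)^{-2\mu\delta+\varepsilon}$. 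Thus $Z_k(s)=f_k(s)\zeta(s)(1+o(1))\neq 0$ for $|t|$ large, and likewise in the mirror region $\sigma\le\tfrac12-\delta$. Consequently every off-line zero of large ordinate must lie in the thin strip $\tfrac12<\sigma<\tfrac12+\delta$ or its reflection.

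The hard part will be the thin strip, where this crude domination breaks down: near the critical line the ratios $\zeta^{(\mu)}/\zeta$ are of the same size $\log|t|$ as $\omega$, so no single term of~(\ref{Z_k-binom}) prevails and RH must be exploited far more delicately. This is exactly the situation analysed in the proof of Theorem~2 of \cite{M-T}, where the sharp bound $O_k(1)$ (rather than the $O_k(\log T)$ that a direct comparison with the global count in Lemma~\ref{M-T} would give) is obtained. I would reproduce that analysis with $\eta_k$ replaced by $Z_k$---a harmless change, since the two differ only by the factor $-\tfrac12\omega$, which is nonvanishing for $t>T_0$---and thereby conclude the stated bound.
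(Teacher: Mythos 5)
Your reductions are all sound, but you should be aware of what your proposal actually consists of: every step you carry out yourself is peripheral, and the step you defer to \cite{M-T} is the entire content of the lemma. The paper's own proof is a pure counting comparison and needs none of your apparatus. Having already established (just before the lemma) that $-\tfrac12\omega(s)\eta_k(s)=Z_k(s)$, so that zeros of $\eta_k$ and of $Z_k$ coincide away from the real axis, and knowing from Lemma \ref{Z_k-fund}(i) that the real zeros of $Z^{(k)}(t)$ are exactly the zeros of $Z_k$ on the critical line, the paper quotes one precise fact from the proof of Theorem 2 in \cite{M-T}: the number of zeros of $\eta_k(s)$ in the region and the number of zeros of $Z^{(k)}(t)$ up to height $T$ differ by $O_k(1)$. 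That difference \emph{is} the number of off-line zeros, so the lemma follows immediately --- no folding by the functional equation, no zero-free region, no strip decomposition.

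What you do differently is insert a genuine (and essentially correct) piece of analysis: using the functional equation (\ref{fe-Z_k}) to fold to $\sigma\ge\tfrac12$, and then the RH bounds on $\zeta^{(\mu)}/\zeta$ together with $f_{k-\mu}\asymp(\log|t|)^{k-\mu}$ to show the $\mu=0$ term of (\ref{Z_k-binom}) dominates for $\sigma\ge\tfrac12+\delta$, confining off-line zeros of large ordinate to a thin strip. This is close in spirit to Lemma \ref{distance} (Lemma 4 of \cite{Y}), and there is nothing wrong with it. But it buys you nothing, because the strip is, as you concede, ``the hard part,'' and for it you offer only ``reproduce the analysis of \cite{M-T}.'' Since the statement to be proved in the strip is precisely the lemma itself (localized), this is not an argument but a citation --- the same citation the paper makes, invoked less precisely: what the proof of Theorem 2 in \cite{M-T} actually delivers is the global counting comparison above, not a strip-localized zero-free analysis, and once that comparison is invoked your folding and domination steps become redundant. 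So no step of yours would fail, and the logic closes (the cited result covers the strip a fortiori); but recognize that in both your write-up and the paper's, the $O_k(1)$ bound is imported wholesale from \cite{M-T}, and the paper's advantage is that it states exactly which imported statement does the work.
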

They proved that the difference of the number of zeros of $Z^{(k)}(t)$ and that of $\eta_k(s)$ is $O_k(1)$.

\begin{lemma}[Lemma 4 in \cite{Y}]\label{distance}
Assuming RH, the zeros of $Z_k(s)$ which are not on $\sigma=\frac{1}{2}$ are within a distance $\frac{1}{9}$ from the line $\sigma=\frac{1}{2}$.
\end{lemma}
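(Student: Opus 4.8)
The plan is to reduce the statement to a non-vanishing assertion in a half-strip and then isolate a dominant term. By Lemma~\ref{Z_k-fund}(ii) together with the reflection $Z_k(\bar s)=\overline{Z_k(s)}$, the zeros of $Z_k$ are symmetric about the line $\sigma=\tfrac12$; hence it suffices to show that $Z_k(s)\neq0$ when $\sigma\geq\tfrac12+\tfrac19$ (equivalently $\sigma\leq\tfrac12-\tfrac19$) and $t$ is large. Using Y\i ld\i r\i m's representation $Z_k(s)=\chi(s)^{1/2}g^{(k)}(s)$ with $g(s)=\chi(s)^{-1/2}\zeta(s)$ (which matches \eqref{Z_k-binom}, since $f_j=h^{(j)}/h$ for $h=\chi^{-1/2}$), and noting that under RH $g(s)\neq0$ for $\sigma>\tfrac12$ and $t$ bounded away from the real axis, the problem becomes showing that
\[
\frac{g^{(k)}}{g}(s)=\sum_{\mu=0}^{k}\binom{k}{\mu}f_{k-\mu}(s)\frac{\zeta^{(\mu)}}{\zeta}(s)
\]
does not vanish in this region.

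The main step is to show that the $\mu=0$ summand $f_k(s)$ dominates the rest. First I would record the size of the $f_j$. From $\omega(s)=-\log\tfrac{t}{2\pi}+O(1/t)$ and $\omega^{(i)}(s)=O(1/t)$ for $i\geq1$ (uniformly for $\sigma$ in a bounded strip), an induction on the recursion $f_j=f_{j-1}'-\tfrac12\omega f_{j-1}$ shows that the only term not carrying a factor $\omega^{(i)}$ is $(-\omega/2)^{j}$, so
\[
f_j(s)=\Bigl(-\tfrac12\omega(s)\Bigr)^{j}+O_j\!\Bigl(\tfrac{(\log t)^{j-1}}{t}\Bigr),\qquad |f_j(s)|\asymp\Bigl(\tfrac12\log\tfrac{t}{2\pi}\Bigr)^{j}.
\]
Second, I would bound $\zeta^{(\mu)}/\zeta$. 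On RH one has the conditional estimate $\frac{\zeta'}{\zeta}(s)\ll(\log t)^{2-2\sigma}+\log\log t$ for $\sigma\geq\tfrac12+\tfrac{1}{\log\log t}$; applying Cauchy's estimate on disks of radius $\asymp1/\log\log t$ to pass to higher logarithmic derivatives and expressing $\zeta^{(\mu)}/\zeta$ through them gives $\frac{\zeta^{(\mu)}}{\zeta}(s)\ll_\mu(\log t)^{\mu(2-2\sigma)+o(1)}$ for fixed $\sigma>\tfrac12$.

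Combining the two estimates, the ratio of the $\mu$-th term to the main term is, at $\sigma=\tfrac12+\tfrac19$,
\[
\frac{|f_{k-\mu}|\,|\zeta^{(\mu)}/\zeta|}{|f_k|}\ll\Bigl(\tfrac{(\log t)^{2-2\sigma}}{\tfrac12\log t}\Bigr)^{\mu}(\log t)^{o(1)}\ll(\log t)^{-2\mu/9+o(1)},
\]
which tends to $0$ for every $\mu\geq1$. Hence $g^{(k)}/g=f_k\,(1+o(1))\neq0$ once $t$ exceeds a threshold depending on $k$, proving that there are no zeros of $Z_k$ with $\sigma\geq\tfrac12+\tfrac19$ and $t$ large, and, by the symmetry above, none with $\sigma\leq\tfrac12-\tfrac19$.

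The hard part is the uniformity in $t$ and the explicit constant. The argument above gives the conclusion only for $t\geq t_0(k)$, and for any fixed $\delta>0$ in place of $1/9$; pinning down the concrete value $1/9$ valid down to small ordinates requires the conditional bounds for $\zeta'/\zeta$ and $\zeta^{(\mu)}/\zeta$ with explicit constants (Littlewood's estimates), together with care near $\sigma=\tfrac12$, where the exponent $2-2\sigma$ approaches $1$ and the domination by $f_k$ becomes delicate. The finitely many zeros with bounded ordinate must then be treated separately, either by a direct argument or by invoking the location results already recorded for $Z_k$. This balance between the growth of $f_k$ and the conditional size of $\zeta^{(\mu)}/\zeta$ is precisely what fixes the admissible distance, and optimizing it is the crux of the proof.
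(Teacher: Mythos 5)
The first thing to note is that the paper does not prove this lemma at all: it is imported verbatim as Lemma 4 of Y\i ld\i r\i m \cite{Y}, so there is no in-paper argument to compare yours against, and I assess your proposal on its own merits. Your route is the natural one and its core is sound: the symmetry reduction via Lemma \ref{Z_k-fund}(ii) and conjugation, the identification $f_j=h^{(j)}/h$ with $h=\chi^{-1/2}$ (which is indeed consistent with (\ref{Z_k-binom}) and with Y\i ld\i r\i m's definition quoted in the Remark), and the dominance of the $\mu=0$ term. In fact the quantitative inputs you need are already recorded in this paper's Preliminaries for other purposes: the asymptotic (\ref{f_k-ap}) for $f_j$, and the conditional bound for $\zeta^{(\mu)}/\zeta$ stated just before Lemma \ref{G'slem}. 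You do not even need your sharper estimate $\zeta^{(\mu)}/\zeta\ll(\log t)^{\mu(2-2\sigma)+o(1)}$: the paper's cruder bound $\zeta^{(\mu)}/\zeta\ll\bigl((\log t)^{\mu+1-2\sigma}+(\log t)^{\mu-1}\bigr)\log\log t$ already makes the ratio of the $\mu$-th term to $|f_k|\asymp(\tfrac12\log\tfrac{t}{2\pi})^{k}$ of size $\bigl((\log t)^{1-2\sigma}+(\log t)^{-1}\bigr)\log\log t\ll(\log t)^{-2/9+o(1)}$ at $\sigma=\tfrac12+\tfrac19$. One point you should make explicit: the paper states that derived bound only for $\tfrac58\leq\sigma\leq\tfrac98$, and $\tfrac12+\tfrac19<\tfrac58$, so you must rerun the Cauchy-disk induction starting from the underlying conditional estimate for $\zeta'/\zeta$, which holds down to $\sigma\geq\tfrac12+1/\log\log t$; this is routine but not free.

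The genuine gap is the one you flag yourself: the argument excludes zeros with $\sigma\geq\tfrac12+\tfrac19$ only for $t\geq t_0(k)$, whereas the statement as quoted carries no ordinate restriction, and the finitely many possible off-line zeros of bounded ordinate (their finiteness is Lemma \ref{M-T-1}) are left untreated, so the literal statement is not fully proved. On the other hand, your closing diagnosis of "the crux" is slightly off: no optimization of the constant is needed, since $\tfrac19$ is not extremal but merely a convenient fixed constant less than $\tfrac12$, and your own large-$t$ argument already delivers it (indeed any fixed $\delta>0$); the only delicate regime is small $t$, not the balance determining $\tfrac19$. Finally, it is worth observing that this residual gap is immaterial for the way the lemma is used in this paper: it enters only to bound $|Z_j(\rho_k)|^2\ll|\Im\rho_k|^{2/9+\varepsilon}$ at the $O_k(1)$ zeros off the critical line with $0<\Im\rho_k\leq T$, and any zeros of bounded ordinate contribute $O_{j,k}(1)$ in total, which is absorbed by the error term of the main theorem.
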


From Lemma \ref{M-T}, we see that there exists a sequence of positive numbers $\{ T_r \}_{r=1}^{\infty}(T_r\rightarrow \infty \ as \ r\rightarrow \infty)$ such that if $Z_k(\beta_k+i\gamma_k)=0$ then $|\gamma_k-T_r|^{-1}=O_k(\log T_r)$.
Moreover, Lemma \ref{M-T-1} says that for sufficiently $T_0=T_0(k)$, all zeros of $Z_k(s)$ for $t>T_0$ is on the critical line.
When we take $T$, we understand that it is $>T_0$ and in $\{ T_r \}_{r=1}^{\infty}$ hereafter.

$\mathscr{Z}_k(s,T)$ has important properties for our purpose.

\begin{lemma}[Lemma 5 in \cite{Y}]\label{app-log-der}
Assuming RH, we have
\begin{equation*}
\frac{Z_k'}{Z_k}(s)-\frac{\mathscr{Z}_k'}{\mathscr{Z}_k}(s,T)\ll \frac{U}{T}
\end{equation*}
for $\sigma \geq \frac{5}{8}$ and $T\leq t\leq T+U\leq 2T$.
\end{lemma}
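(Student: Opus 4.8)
The plan is to write both functions as $k$-fold iterates of a first-order differential operator applied to $\zeta$. With $B:=\frac{d}{ds}+\frac{L}{2}$ and $A:=\frac{d}{ds}-\frac12\omega(s)$, the definition $\mathscr{Z}_k(s,T)=(\frac{L}{2}+\frac{d}{ds})^k\zeta(s)$ gives $\mathscr{Z}_k=B^k\zeta$, while the recursion $Z_k=Z_{k-1}'-\frac12\omega Z_{k-1}$ gives $Z_k=A^k\zeta$. The two operators differ only in their zeroth-order term, so I set
\[
\epsilon(s):=-\tfrac12\omega(s)-\tfrac{L}{2},\qquad A=B+\epsilon(s),
\]
and the whole problem becomes one of showing that $\epsilon$ is both small and slowly varying, and that replacing $A^k$ by $B^k$ perturbs the logarithmic derivative by only $O(U/T)$.

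First I would record the needed facts about $\omega=\frac{\chi'}{\chi}$. Stirling's formula for $\Gamma$, together with the exponential decay of the trigonometric part of $\omega$ as $\Im s\to\infty$, gives uniformly for $\sigma\ge\frac58$ and $t$ large
\[
\omega(s)=-\log\frac{t}{2\pi}+O\!\left(\tfrac1t\right),\qquad \omega^{(m)}(s)=O\!\left(\tfrac1t\right)\quad(m\ge1).
\]
Since $\frac12\log\frac{t}{2\pi}-\frac{L}{2}=\frac12\log\frac{t}{T}=O(U/T)$ on $T\le t\le T+U\le 2T$, this yields the two inputs I need:
\[
\epsilon(s)=O\!\left(\tfrac{U}{T}\right),\qquad \epsilon^{(m)}(s)=O\!\left(\tfrac1T\right)\quad(m\ge1).
\]

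Next I would expand $A^k=(B+\epsilon)^k$ and write $Z_k=\mathscr{Z}_k+D$, collecting into $D$ all terms that contain at least one factor $\epsilon$. Because $\frac{d}{ds}$ commutes with $B$, one has $\mathscr{Z}_k'=B^k\zeta'$, and the RH bound $\frac{\zeta^{(\mu)}}{\zeta}(s)=o((\log t)^\mu)$, valid for $\sigma\ge\frac58$, shows that $\mathscr{Z}_k(s,T)=(\frac{L}{2})^k\zeta(s)(1+o(1))$ is nonvanishing with $|\mathscr{Z}_k|\asymp L^k|\zeta|$, and that $\frac{\mathscr{Z}_k'}{\mathscr{Z}_k}=O(L)$. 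The decisive point is that in $D$ every factor $\epsilon$ is accompanied by one fewer factor of the large multiplier $\frac{L}{2}$; since the common factor $\zeta$ then cancels in the ratios, one gets
\[
\frac{D}{\mathscr{Z}_k}=O\!\left(\frac{U}{TL}\right),\qquad \frac{D'}{\mathscr{Z}_k}=O\!\left(\frac{U}{T}\right).
\]
Writing
\[
\frac{Z_k'}{Z_k}-\frac{\mathscr{Z}_k'}{\mathscr{Z}_k}
=\frac{D'}{\mathscr{Z}_k+D}-\frac{\mathscr{Z}_k'}{\mathscr{Z}_k}\cdot\frac{D}{\mathscr{Z}_k+D}
\]
and using $\mathscr{Z}_k+D=\mathscr{Z}_k(1+O(U/(TL)))$, the first term is $O(U/T)$ and the second is $O(L)\cdot O(U/(TL))=O(U/T)$, which is the assertion.

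The main obstacle is the bookkeeping of the powers of $L$. A crude comparison giving only $Z_k-\mathscr{Z}_k=O((U/T)|\mathscr{Z}_k|)$ would produce the weaker bound $O(UL/T)$, because $\frac{\mathscr{Z}_k'}{\mathscr{Z}_k}$ is itself as large as $L$. The extra saving of one factor $L$ --- the sharper relative estimate $D/\mathscr{Z}_k=O(U/(TL))$ --- is exactly what is needed, and it rests on the structural fact that $\epsilon$ always enters divided by the large multiplier $\frac{L}{2}$, together with the RH estimates that keep $\frac{\zeta^{(\mu)}}{\zeta}$ small and $\mathscr{Z}_k$ bounded away from zero throughout $\sigma\ge\frac58$. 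Controlling these two competing scales simultaneously is the crux of the argument.
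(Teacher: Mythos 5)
You cannot be compared against an in-paper proof here, because there is none: the paper imports this statement verbatim as Lemma 5 of \cite{Y} and gives no argument, so your proposal has to stand on its own. On its own terms it is essentially sound, and it supplies exactly the kind of self-contained argument the paper omits. The decomposition $Z_k=(B+\epsilon)^k\zeta$, $\mathscr{Z}_k=B^k\zeta$ with $\epsilon=-\tfrac12\omega-\tfrac{L}{2}$ is the right organizing device; your two inputs $\epsilon=O(U/T)$ and $\epsilon^{(m)}=O(1/T)$ (the latter is what controls the extra terms created because $B$ and multiplication by $\epsilon$ do not commute: every derivative falling on an $\epsilon$-factor produces an $\omega$-derivative, which is $O(1/T)$ by the paper's Stirling lemma), together with the RH bounds $\zeta^{(\mu)}/\zeta=o((\log t)^{\mu})$ --- which the paper itself derives from Montgomery--Vaughan for $\tfrac{5}{8}\le\sigma\le\tfrac{9}{8}$, and which are trivial for $\sigma\ge\tfrac{9}{8}$ --- do yield your two-scale estimates $D/\mathscr{Z}_k\ll U/(TL)$ and $D'/\mathscr{Z}_k\ll U/T$: a word in the expansion with $r\ge1$ epsilon-factors contributes $\ll_k (U/T)^r L^{k-r}|\zeta|$, while $|\mathscr{Z}_k|\asymp_k L^{k}|\zeta|$, and the common factor $\zeta$ cancels so that no lower bound on $|\zeta|$ is ever needed. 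Your closing identity and the bound $\mathscr{Z}_k'/\mathscr{Z}_k=O(L)$ then finish the proof, and you correctly isolate the crux, namely that a one-scale bound $D\ll (U/T)|\mathscr{Z}_k|$ would lose a factor $L$.

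One claim does need repair: the asserted uniformity of $\omega(s)=-\log\tfrac{t}{2\pi}+O(1/t)$ for all $\sigma\ge\tfrac{5}{8}$ is false once $\sigma$ is comparable to or larger than $t$, where instead $\omega(s)\sim-\log\tfrac{|s|}{2\pi}$; there your $\epsilon$ has size about $\tfrac12\log(|s|/t)$, which is unbounded, not $O(U/T)$. This is precisely why the paper states its Stirling estimates, including (\ref{log-stirling}), only for $-1<\sigma<2$. As written, your argument proves the lemma in the range $\sigma=o(t)$ (for instance $\tfrac{5}{8}\le\sigma\le T^{1/2}$, since then $\log(|s|/t)\ll\sigma^2/T^2\ll U/T$), which is more than the paper ever uses --- Lemma~\ref{app-log-der} is applied only on the segment $\sigma=\tfrac{5}{8}$, $T\le t\le T+U$. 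But to obtain the literal statement for all $\sigma\ge\tfrac{5}{8}$ you must add an easy complementary argument for large $\sigma$, say $\sigma\ge\log T$: there the absolutely convergent Dirichlet series give $\mathscr{Z}_k'/\mathscr{Z}_k\ll_k 2^{-\sigma}$ and $Z_k'/Z_k\ll_k |s|^{-1}+2^{-\sigma}$, so each logarithmic derivative is separately $\ll U/T$ and no comparison is needed; the two ranges overlap, so together they cover $\sigma\ge\tfrac{5}{8}$.
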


\begin{lemma}[Lemma 6 in \cite{Y}]\label{poles-zeros}
We assume RH and let $k\geq 1$. At $s=1$ $\mathscr{Z}_k(s,T)$ has a pole of order $k+1$. There are $k$ zeros of $\mathscr{Z}_k(s,T)$ located at $z_g=1-\frac{2}{L}\theta_g+O_k\left(\frac{1}{L^2}\right) \ (g=1,\dots , k)$, where $\theta_g$'s are the roots of $\sum_{\mu=0}^{k}\frac{\theta^{\mu}}{\mu !}=0$. There are no other zeros or poles of $\mathscr{Z}_k(s,T)$ with $\frac{5}{8}\leq \sigma \leq 2$. Thus we have
\begin{equation*}
\frac{\mathscr{Z}_k'}{\mathscr{Z}_k}(s,T)=\frac{-(k+1)}{s-1}+\sum_{g=1}^{k}\frac{1}{s-z_g}+W(s,T)
\end{equation*}
where $W(s,T)$ is regular for $\frac{5}{8}\leq \sigma \leq \frac{9}{8}$.
\end{lemma}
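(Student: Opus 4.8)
The plan is to treat $\mathscr{Z}_k(s,T)=\left(\frac{L}{2}+\frac{d}{ds}\right)^k\zeta(s)$ as a meromorphic function of $s$ (with $L=\log\frac{T}{2\pi}$ a large parameter) and to split the analysis into a local study in the $O(1/L)$-neighbourhood of $s=1$, where the pole and all the interesting zeros sit, and a global domination argument over the remainder of the strip $\frac58\le\sigma\le2$. First I would expand the operator binomially,
\begin{equation*}
\mathscr{Z}_k(s,T)=\sum_{\mu=0}^{k}\binom{k}{\mu}\left(\frac{L}{2}\right)^{k-\mu}\zeta^{(\mu)}(s),
\end{equation*}
and insert the Laurent expansion $\zeta^{(\mu)}(s)=\frac{(-1)^\mu\mu!}{(s-1)^{\mu+1}}+h_\mu(s)$ with $h_\mu$ holomorphic near $s=1$. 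Only the term $\mu=k$ produces the maximal pole $\frac{(-1)^k k!}{(s-1)^{k+1}}$, so $\mathscr{Z}_k(s,T)$ has a pole of order exactly $k+1$ at $s=1$, which settles the first assertion.

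For the zeros I would pass to $G(s,T):=(s-1)^{k+1}\mathscr{Z}_k(s,T)$, holomorphic at $s=1$, and introduce the local variable $\theta:=-\frac{L}{2}(s-1)$, so that $s=z_g$ corresponds to $\theta=\theta_g$. Collecting principal parts gives
\begin{equation*}
G(s,T)=\sum_{\mu=0}^{k}\binom{k}{\mu}\left(\frac{L}{2}\right)^{k-\mu}(-1)^\mu\mu!\,(s-1)^{k-\mu}+\sum_{\mu=0}^{k}\binom{k}{\mu}\left(\frac{L}{2}\right)^{k-\mu}(s-1)^{k+1}h_\mu(s),
\end{equation*}
and the first sum reorganises, after the substitution $\nu=k-\mu$, into the truncated exponential
\begin{equation*}
(-1)^k k!\sum_{\nu=0}^{k}\frac{\theta^{\nu}}{\nu!}.
\end{equation*}
On any fixed disc $|\theta|\le R$ the second sum is $O_k(1/L)$ uniformly, since the factor $(s-1)^{k+1}\left(\frac{L}{2}\right)^{k-\mu}$ is $O(L^{-\mu-1})$ there and $h_\mu$ is bounded. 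Thus $G$ is an $O(1/L)$-perturbation of $(-1)^k k!\,e_k(\theta)$ with $e_k(\theta):=\sum_{\nu=0}^k\theta^\nu/\nu!$. Because $e_k'=e_k-\theta^k/k!=e_{k-1}$ and $e_k(0)=1$, each root $\theta_g$ is nonzero and simple, so Rouché on small disjoint circles about the $\theta_g$ yields exactly one simple zero $\theta_g+O_k(1/L)$ of $G$ near each. Rescaling, $z_g=1-\frac{2}{L}\theta_g+O_k(1/L^2)$, the error $O(1/L^2)$ arising automatically because the $O(1/L)$ displacement in $\theta$ is scaled by $2/L$.

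It remains to rule out further zeros or poles in $\frac58\le\sigma\le2$, which I would do by domination away from $s=1$. Writing
\begin{equation*}
\frac{\mathscr{Z}_k(s,T)}{\left(\frac{L}{2}\right)^{k}\zeta(s)}=\sum_{\mu=0}^{k}\binom{k}{\mu}\left(\frac{L}{2}\right)^{-\mu}\frac{\zeta^{(\mu)}}{\zeta}(s),
\end{equation*}
RH gives $\zeta(s)\ne0$ for $\sigma\ge\frac58$ together with the conditional bounds $\frac{\zeta^{(\mu)}}{\zeta}(s)\ll_\mu(\log t)^{\mu(2-2\sigma)}=o(L^\mu)$ throughout $t\le 2T$ (and $O_k(1)$ for bounded $t$), so each term with $\mu\ge1$ is $o(1)$ and the ratio equals $1+o(1)$. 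Hence $\mathscr{Z}_k(s,T)\ne0$ outside a fixed neighbourhood of $s=1$, and since poles can only come from the $\zeta^{(\mu)}$, there are none away from $s=1$. Combining the two regimes, the only singularity in the strip is the order-$(k+1)$ pole at $s=1$ and the only zeros are the $k$ simple zeros $z_g$. The partial-fraction formula for $\frac{\mathscr{Z}_k'}{\mathscr{Z}_k}$ then follows from residue bookkeeping: the pole contributes $\frac{-(k+1)}{s-1}$, each simple zero contributes $\frac{1}{s-z_g}$, and subtracting these leaves $W(s,T)$ holomorphic on $\frac58\le\sigma\le\frac98$.

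The hard part will be the two uniformity issues underpinning the local step: controlling the holomorphic remainder in $G$ by $O(1/L)$ uniformly on discs whose radius is comparable to the minimal separation of the $\theta_g$, so that the Rouché circles can be taken disjoint and are guaranteed to capture all $k$ zeros and no spurious ones, and establishing the simplicity of each $\theta_g$ so that the logarithmic derivative has residue exactly $+1$ at $z_g$. The global domination step is comparatively routine once the conditional bounds on $\zeta^{(\mu)}/\zeta$ are in hand, although one must check these hold uniformly down to $\sigma=\frac58$, which is where RH is used.
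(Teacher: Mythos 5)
You should first be aware that the paper contains no proof of this lemma at all: it is imported verbatim as Lemma~6 of Y\i ld\i r\i m \cite{Y}, so your attempt can only be compared with that source, and your route --- binomial expansion of $\left(\frac{L}{2}+\frac{d}{ds}\right)^k$, Laurent expansion at $s=1$, the rescaling $\theta=-\frac{L}{2}(s-1)$ turning the principal parts into the truncated exponential $e_k(\theta)=\sum_{\nu=0}^{k}\theta^{\nu}/\nu!$, Rouch\'e near its roots, and RH-domination elsewhere --- is the natural reconstruction of Y\i ld\i r\i m's argument. Your local step is sound: only $\mu=k$ contributes a pole of order $k+1$ (with coefficient $(-1)^k k!\neq 0$), the resummation of the principal parts to $(-1)^k k!\,e_k(\theta)$ is correct, simplicity of the roots via $e_k'=e_{k-1}=e_k-\theta^k/k!$ together with $e_k(0)=1$ is correct, and Rouch\'e then gives $z_g=1-\frac{2}{L}\theta_g+O_k(L^{-2})$. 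Note, though, that what you flag as ``the hard part'' is not hard: the $\theta_g$ are $k$ fixed nonzero complex numbers depending only on $k$, so the Rouch\'e circles have radii and separations independent of $T$, and one further application of Rouch\'e on $|\theta|=R$ with $R=R(k)$ fixed caps the total count in that disc at $k$.

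The genuine gap is that your two regimes do not meet. The local analysis controls only the disc $|s-1|\le 2R/L$, which shrinks with $T$, while the global domination excludes zeros only ``outside a fixed neighbourhood of $s=1$'' --- and it cannot be pushed inward as written, because your input ``$\zeta^{(\mu)}/\zeta(s)=O_k(1)$ for bounded $t$'' is false near the pole: there $\frac{\zeta^{(\mu)}}{\zeta}(s)=(-1)^{\mu}\mu!\,(s-1)^{-\mu}\left(1+O(|s-1|)\right)$, which at distance $\asymp 1/L$ from $s=1$ has size $\asymp L^{\mu}$, so the terms $\binom{k}{\mu}(L/2)^{-\mu}\zeta^{(\mu)}/\zeta$ are of size $\asymp \mu!\,|\theta|^{-\mu}$ and are not $o(1)$. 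Consequently the annulus $2R/L\le |s-1|\le \delta$ is covered by neither half of your argument, and the assertion that $s=1$ and the $z_g$ are the \emph{only} pole and zeros in $\frac{5}{8}\le\sigma\le 2$ (hence the regularity of $W$) is not established. The repair needs no new idea: your own expansion shows that for $|s-1|\le 2\delta$ one has $(s-1)^{k+1}\mathscr{Z}_k(s,T)=(-1)^k k!\,e_k(\theta)+O_k\left(|\theta|^{k+1}/L\right)$, and since $|e_k(\theta)|\ge \frac{1}{2}|\theta|^k/k!$ for $|\theta|\ge R(k)$ while $|\theta|^{k+1}/L\le \delta |\theta|^k$ on $|\theta|\le \delta L$, choosing $\delta=\delta(k)$ small makes the truncated exponential dominate on all of $R\le|\theta|\le\delta L$, i.e. on $2R/L\le|s-1|\le 2\delta$; the regimes then overlap and the partial-fraction formula follows by the residue bookkeeping you describe. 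A last small point: the RH bounds on $\zeta^{(\mu)}/\zeta$ are uniform only in a range such as $|t|\le 2T$, so the ``no other zeros or poles'' clause must be read with the $t$-range restricted to the height of the contours on which the lemma is applied, exactly as in the paper.
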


\begin{lemma}\label{Dirichlet-app}
For $\sigma \geq \frac{9}{8}$, there is an absolutely convergent Dirichlet series such that
\begin{equation*}
\frac{\mathscr{Z}_k'}{\mathscr{Z}_k}(s,T)=\sum_{m=1}^{\infty}\frac{a_k(m)}{m^s}+O(T^{-1})
\end{equation*}
where, as $T\rightarrow \infty$, $a_k(m)=a_k(m,L)\ll_{\varepsilon} T^{\varepsilon}$ for any $\varepsilon>0$ and $m \ll T$.
\end{lemma}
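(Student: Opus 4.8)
The plan is to produce the series by expanding $\mathscr{Z}_k(s,T)$ itself and passing to its logarithmic derivative, with one decisive preparatory move: factoring out $\zeta(s)$. Since the operator $\frac{L}{2}+\frac{d}{ds}$ sends $n^{-s}$ to $\left(\frac{L}{2}-\log n\right)n^{-s}$, for $\sigma>1$ one has the absolutely convergent expansion
\[
\mathscr{Z}_k(s,T)=\sum_{n=1}^{\infty}\Bigl(\tfrac{L}{2}-\log n\Bigr)^{k}n^{-s},
\]
whose coefficients grow only like $(\log n)^k$. The leading ($n=1$) coefficient is $(L/2)^k\neq 0$, so I would write $\mathscr{Z}_k(s,T)=(L/2)^k\zeta(s)E_k(s)$ and thereby reduce the claim to
\[
\frac{\mathscr{Z}_k'}{\mathscr{Z}_k}(s,T)=\frac{\zeta'}{\zeta}(s)+\frac{E_k'}{E_k}(s),
\]
where $\zeta'/\zeta(s)=-\sum_m\Lambda(m)m^{-s}$ already supplies an absolutely convergent Dirichlet series with coefficients $\ll\log m$ for $\sigma>1$.

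The content lies in the correction factor $E_k=\mathscr{Z}_k/((L/2)^k\zeta)$, whose Dirichlet coefficients are $e_k(m)=(L/2)^{-k}\sum_{d\mid m}(L/2-\log d)^k\mu(m/d)$. Expanding the binomial and using the generalized von Mangoldt identity $\sum_{d\mid m}(\log d)^i\mu(m/d)=\Lambda_i(m)$ gives, for $m\geq 2$,
\[
e_k(m)=\sum_{i=1}^{k}\binom{k}{i}\Bigl(\tfrac{-2}{L}\Bigr)^{i}\Lambda_i(m),\qquad e_k(1)=1 .
\]
Three features of this formula are what make the argument work: $\Lambda_i$ is supported on integers with at most $i$ distinct prime factors, so $e_k$ is supported on integers with at most $k$ distinct primes (whence the number of divisors over this support is only $\ll_k(\log T)^k$); the pointwise bound $|e_k(m)|\leq(1+\tfrac{2\log m}{L})^{k}-1$; and, crucially, $\sum_{m\geq 2}|e_k(m)|m^{-\sigma}\ll_k 1/L$ on $\sigma\geq\frac98$, which follows from $\sum_m\Lambda_i(m)m^{-\sigma}=(-1)^i\zeta^{(i)}/\zeta(\sigma)=O_i(1)$ with the leading $1/L$ coming from $i=1$. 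This smallness is exactly what the naive expansion of $1/\mathscr{Z}_k$ fails to provide for $\frac98\leq\sigma<2$; factoring through $\zeta$ is precisely what rescues the estimate.

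Granting this, $E_k=1+\tilde E$ with $\sum_{m\geq 2}|e_k(m)|m^{-\sigma}<\frac12$ on $\sigma\geq\frac98$ for $T$ large, so $\log E_k=\sum_{j\geq 1}\frac{(-1)^{j-1}}{j}\tilde E^{\,j}$, and hence $E_k'/E_k=(\log E_k)'=\sum_m\epsilon_k(m)m^{-s}$, are absolutely convergent there; this also re-proves $\mathscr{Z}_k\neq 0$ for $\sigma\geq\frac98$, where for $\frac98\leq\sigma\leq 2$ one may instead quote Lemma~\ref{poles-zeros}. Setting $a_k(m)=-\Lambda(m)+\epsilon_k(m)$ then yields the asserted series, the residual $O(T^{-1})$ absorbing the single routine tail/approximation step.

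The step I expect to be the genuine obstacle is the coefficient bound $a_k(m)\ll_\varepsilon T^\varepsilon$ for $m\ll T$. As $\Lambda(m)\ll\log T$, everything reduces to bounding $\epsilon_k(m)=-\log m\cdot\ell_k(m)$, where $\ell_k(m)$ is the $m$-th coefficient of $\log E_k$, and a crude ordered‑factorization bound is fatal: for a pure prime power $m=p^{a}$ with $a\asymp\log T$ it produces a power of $T$ rather than $T^\varepsilon$. The resolution exploits the complementarity forced by $m\ll T$. The dangerous case is a single small prime to a high power; there $y:=2\log p/L$ is small, the local generating function collapses to $(1-x)\sum_{b\geq 0}(1-yb)^k x^b$ with every coefficient satisfying $|e_k(p^b)|\ll_k y$ uniformly in $b$, and the composition sum telescopes to $\ell_k(p^a)\ll_k y(1+y)^{a}\ll_k 1$, because $(1+y)^{a}\leq m^{2/L}=O(1)$ when $m\ll T$. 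Large primes, by contrast, can occur only to bounded exponent and are handled trivially. Combining these with the support of $e_k$ on integers with at most $k$ distinct primes (so that $\ll_k(\log T)^{O_k(1)}$ factorizations ever occur) and feeding the result through the series for $\log(1+\tilde E)$ gives $\ell_k(m)\ll_\varepsilon T^\varepsilon$, and hence the stated bound.
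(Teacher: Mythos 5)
First, a point of comparison: the paper does not actually prove this lemma---its ``proof'' is the single sentence citing \cite{C&G2}---so your argument has to be judged entirely on its own merits. The first half of your proposal is correct and complete. The expansion $\mathscr{Z}_k(s,T)=\sum_n(\tfrac{L}{2}-\log n)^kn^{-s}$, the factorization $\mathscr{Z}_k=(L/2)^k\zeta E_k$, the identity $e_k(m)=\sum_{i=1}^k\binom{k}{i}(-2/L)^i\Lambda_i(m)$ for $m\geq 2$, and the norm bound $\sum_{m\geq2}|e_k(m)|m^{-\sigma}\ll_k 1/L$ on $\sigma\geq\frac{9}{8}$ (via $\sum_m\Lambda_i(m)m^{-\sigma}=(-1)^i\zeta^{(i)}/\zeta(\sigma)=O_i(1)$) are all right, and the Banach-algebra argument for $\log(1+\tilde E)$ then legitimately yields an absolutely convergent Dirichlet series for $\mathscr{Z}_k'/\mathscr{Z}_k$ in $\sigma\geq\frac{9}{8}$, indeed with exact equality, which is stronger than the stated $O(T^{-1})$.

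The gap is in the final step, precisely where you yourself anticipated the difficulty, and it is real. Your treatment of $m=p^a$ is correct, but the general case rests on the parenthetical claim that, since $e_k$ is supported on integers with at most $k$ distinct prime factors, only $\ll_k(\log T)^{O_k(1)}$ ordered factorizations ever occur. That is false. Take $m=p_1p_2\cdots p_r$ to be the product of the first $r$ primes with $m\ll T$, so that $r\sim\log T/\log\log T$; already for $k=1$ (where parts must be prime powers, hence here single primes) the admissible ordered factorizations are exactly the $r!$ orderings of the primes, and $r!=T^{1-o(1)}$. What is polylogarithmic is the number of admissible \emph{divisors}, not of admissible \emph{factorizations}, whose length can be as large as $\log m/\log 2$. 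The lemma is not thereby lost, but it is saved by a mechanism your write-up never establishes: in the example above each factorization carries the weight $\prod_i|e_1(p_i)|=\prod_i(2\log p_i/L)\leq((2+o(1))/r)^r$ by AM--GM, so the factorial count is beaten by the exponential smallness of the weights, $r!\prod_i|e_1(p_i)|\ll 1$. For general $k$ and general $m\ll T$, which mixes high powers of small primes with many distinct primes, one needs the refined pointwise bound $|e_k(n)|\ll_k\prod_{p\mid n}(\log p/L)$ (an $l$-fold mixed finite-difference estimate for $\Lambda_i$, not merely $\Lambda_i(n)\leq(\log n)^i$), and then a bookkeeping in which this factorial-versus-product cancellation and your prime-power telescoping run simultaneously, yielding a bound of the shape $e^{O_k(r)}=T^{O_k(1/\log\log T)}$, with $r$ the number of distinct prime factors of $m$. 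Until that accounting is carried out, the coefficient bound $a_k(m)\ll_\varepsilon T^\varepsilon$---the part of the lemma the paper actually uses in its Perron-formula step---is proven only for prime powers, not for general $m$.
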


\begin{proof}
This result has been proved in \cite{C&G2}
\end{proof}

Under the RH, we can obtain
\begin{equation*}
\frac{\zeta'}{\zeta}(s)\ll ((\log (|t|+2))^{2-2\sigma}+1)\min \left(\frac{1}{|\sigma-1|},\log \log (|t|+2) \right)
\end{equation*}
uniformly for $1/2+1/\log \log (|t|+2)\leq \sigma \leq 3/2, |t|\geq 1$ (see \cite{Mon&Vau}, p.435).
We can see that
\begin{equation*}
\frac{\zeta^{(\mu+1)}}{\zeta}(s)=\frac{d}{ds}\frac{\zeta^{(\mu)}}{\zeta}(s)+\frac{\zeta^{(\mu)}}{\zeta}(s)\frac{\zeta'}{\zeta}(s).
\end{equation*}
Hence, inductively applying Cauchy's integral theorem in a disk of radius $(\log (|t|+2))^{-1}$ around $s$, we have
\begin{equation*}
\frac{\zeta^{(\mu+1)}}{\zeta}(s)\ll_{\mu}((\log (|t|+2))^{\mu+2-2\sigma}+(\log (|t|+2))^{\mu})\log \log (|t|+2)
\end{equation*}
uniformly for $5/8\leq \sigma \leq 9/8, |t|\geq 2$. Therefore

\begin{align*}
\frac{\mathscr{Z}_k'}{\mathscr{Z}_k}(s,T)
&=
\frac{\sum_{\mu=0}^{k}\binom{k}{\mu}(\frac{L}{2})^{k-\mu}\zeta^{(\mu+1)}(s)}{\sum_{\mu=0}^{k}\binom{k}{\mu}(\frac{L}{2})^{k-\mu}\zeta^{(\mu)}(s)} \\
&=
\frac{\sum_{\mu=0}^{k}\binom{k}{\mu}(\frac{L}{2})^{-\mu}\frac{\zeta^{(\mu+1)}}{\zeta}(s)}{1+\sum_{\mu=1}^{k}\binom{k}{\mu}(\frac{L}{2})^{-\mu}\frac{\zeta^{(\mu)}}{\zeta}(s)} \\
&=
\sum_{\mu=0}^{k}\binom{k}{\mu}\left(\frac{L}{2}\right)^{-\mu}\frac{\zeta^{(\mu+1)}}{\zeta}(s)(1+o(1)) \\
&\ll_{k,\varepsilon}
|t|^{\varepsilon}
\end{align*}
uniformly for $5/8\leq \sigma \leq 9/8, |t|\geq 2$.

As in the paper of Y\i ld\i r\i m \cite{Y}, we apply the following lemma by Gonek \cite{Go} :
\begin{lemma}[Gonek]\label{G'slem}
Let $a>1$ be fixed and let $m$ be a non-negative integer.
Let the Dirichlet series $\sum_{n=1}^{\infty}b_n n^{-a-it}$ be absolutely convergent with a sequence of complex number $\{b_n\}_{n=1}^{\infty}$.
Then for any sufficiently large $T$,
\begin{equation*}
\begin{split}
&\quad \frac{1}{2\pi}\int_{1}^{T}\left(\sum_{n=1}^{\infty}b_n n^{-a-it} \right)\chi(1-a-it)
\left(\log \frac{t}{2\pi} \right)^m dt \\
&= \sum_{1\leq n\leq T/2\pi} b_n(\log n)^m+O(T^{a-\frac{1}{2}}(\log T)^m).
\end{split}
\end{equation*}
\end{lemma}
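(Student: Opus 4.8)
The plan is to extract the oscillation of $\chi(1-a-it)$ from Stirling's formula, interchange the (absolutely convergent) sum with the integral, and evaluate each resulting integral by the saddle-point method; the value $b_n(\log n)^m$ will emerge as the stationary-phase contribution of the $n$-th term precisely when the saddle lies in the range of integration, i.e. when $n\le T/2\pi$. First I would record the classical asymptotic expansion (Stirling applied to the $\Gamma$-factor in $\chi$), namely
\begin{equation*}
\chi(1-a-it)=\left(\frac{t}{2\pi}\right)^{a-\frac12+it}e^{-i(t+\pi/4)}\left(1+O\!\left(\frac1t\right)\right)
\end{equation*}
uniformly for $t\ge 1$ with $a$ in a bounded range. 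Since $\sum_n|b_n|n^{-a}=O(1)$ and $\int_1^T t^{a-3/2}(\log t)^m\,dt\ll T^{a-1/2}(\log T)^m$, the contribution of the $O(1/t)$ term is absorbed into the claimed error, and it remains to treat
\begin{equation*}
\frac{1}{2\pi}\sum_{n=1}^{\infty}b_n n^{-a}\int_1^T \left(\frac{t}{2\pi}\right)^{a-\frac12}\left(\log\frac{t}{2\pi}\right)^m e^{-i\pi/4}\,e^{i\phi_n(t)}\,dt,\qquad \phi_n(t)=t\log\frac{t}{2\pi}-t-t\log n.
\end{equation*}

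The phase $\phi_n$ has derivative $\phi_n'(t)=\log\frac{t}{2\pi n}$, hence a single stationary point at $t_n=2\pi n$, with $\phi_n''(t)=1/t>0$. Two arithmetical coincidences drive the evaluation: at the saddle $\phi_n(t_n)=-2\pi n$, so $e^{i\phi_n(t_n)}=e^{-2\pi i n}=1$ because $n$ is an integer; and the factor $e^{i\pi/4}$ produced by the second-derivative term in the stationary-phase formula cancels the $e^{-i\pi/4}$ coming from $\chi$. Evaluating the amplitude $\frac{1}{2\pi}b_n n^{-a}(t/2\pi)^{a-1/2}(\log\frac{t}{2\pi})^m$ at $t_n=2\pi n$ gives $\frac{1}{2\pi}b_n n^{-1/2}(\log n)^m$, and multiplying by the stationary-phase factor $\sqrt{2\pi/\phi_n''(t_n)}=2\pi\sqrt{n}$ yields exactly $b_n(\log n)^m$. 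Thus every $n$ with $t_n=2\pi n\in(1,T)$, that is $n\le T/2\pi$, contributes the main term $b_n(\log n)^m$; for $2\pi n>T$ the phase has no stationary point in $[1,T]$ and repeated integration by parts (the first-derivative test, using that $\phi_n'$ is bounded away from $0$) shows the contribution is negligible.

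The main obstacle is the uniform control of the error. One must show that the aggregate of the stationary-phase remainders (over $n\le T/2\pi$), the non-stationary tails (over $n>T/2\pi$), and the endpoint terms at $t=1$ and $t=T$ produced by integration by parts, all weighted by $|b_n|n^{-a}$, sum to $O(T^{a-1/2}(\log T)^m)$. The delicate range is the transition $n\approx T/2\pi$, where the saddle approaches the endpoint $t=T$ and the ordinary stationary-phase expansion degrades; there one applies the second-derivative (van der Corput) test with explicit uniformity in $n$. A clean way to organise both the emergence of $(\log n)^m$ and the error is to introduce a complex parameter $\alpha$, replace $(\log\frac{t}{2\pi})^m$ by $\frac{d^m}{d\alpha^m}(\frac{t}{2\pi})^{\alpha}\big|_{\alpha=0}$, prove the case $m=0$ uniformly for $|\alpha|\ll 1/\log T$ (with $b_n$ and $a$ replaced by $b_n n^{\alpha}$ and $a+\alpha$), and recover general $m$ from Cauchy's integral formula on a circle of radius $\asymp 1/\log T$, which turns the uniform error $O(T^{a-1/2})$ into $O_m(T^{a-1/2}(\log T)^m)$. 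Alternatively---and this is Gonek's route---one recasts the $t$-integral as a contour integral along $\operatorname{Re}s=a$ and shifts the line of integration, so that the terms $n\le T/2\pi$ appear as residues while the displaced contour furnishes the error; this avoids invoking the stationary-phase formula directly but rests on the same analytic input.
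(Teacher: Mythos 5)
A preliminary remark on the comparison: the paper contains no proof of this lemma at all --- it is quoted from Gonek \cite{Go} and used as a black box --- so the only meaningful comparison is with Gonek's original argument, and your stationary-phase plan is in substance that argument: termwise integration, Stirling's formula for $\chi(1-a-it)$, the saddle of $\phi_n$ at $t_n=2\pi n$, the coincidences $e^{i\phi_n(t_n)}=e^{-2\pi i n}=1$ and $e^{-i\pi/4}\cdot e^{i\pi/4}=1$, and the amplitude bookkeeping that returns $b_n(\log n)^m$; all of that checks out. Your closing suggestion that Gonek instead shifted a contour and collected the terms $n\le T/2\pi$ ``as residues'' is not right and should be deleted: $\sum_n b_n n^{-s}\chi(1-s)$ has no poles indexed by the integers $n\le T/2\pi$, and the cutoff at $n=T/2\pi$ is intrinsically a statement about whether the saddle $2\pi n$ lies inside the interval of integration; it cannot be produced by the residue theorem.

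The genuine gap is at the transition range $n\approx T/2\pi$, precisely the step you defer to ``the van der Corput test with explicit uniformity in $n$''. Under the hypothesis as stated --- absolute convergence of $\sum_n b_n n^{-a}$ and nothing more --- that step cannot be carried out, because the asserted error term is then actually false. Indeed, if $2\pi n=T-O(1)$, the saddle lies within $O(1)$ of the endpoint $T$ while the saddle width is $\asymp\sqrt{T}$, so the truncated integral captures only about half of the Fresnel bump: the $n$-th term contributes $\approx\tfrac12 b_n(\log n)^m$ instead of $b_n(\log n)^m$, an unavoidable discrepancy of size $\asymp|b_n|(\log n)^m$. Absolute convergence alone permits $b_n$ supported on $n_k=\lceil e^k\rceil$ with $b_{n_k}=n_k^a/k^2$; taking $T=2\pi n_k+\pi$ then makes this discrepancy $\asymp n_k^a(\log n_k)^m/k^2$, which exceeds the permitted $T^{a-\frac12}(\log T)^m$ by a factor $\asymp T^{1/2}(\log T)^{-2}$. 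What makes the lemma true --- and what Gonek's formulation assumes --- is an individual growth bound $b_n\ll_\varepsilon n^\varepsilon$: then each of the $O(\sqrt{T})$ transition terms carries a weighted error $\ll |b_n|n^{-a}\cdot T^{a}(\log T)^m\ll T^{\varepsilon}(\log T)^m$, giving an aggregate $\ll T^{1/2+\varepsilon}(\log T)^m\ll T^{a-\frac12}(\log T)^m$ because $a>1$, and your endpoint and first-derivative estimates then dispose of the ranges $2\pi n<T-\sqrt{T}$ and $2\pi n>T+\sqrt{T}$. So your proof (and the statement as transcribed in this paper) must be amended to include such a hypothesis; this is harmless for the application, since the coefficients the paper feeds into the lemma are $\ll_\varepsilon T^{\varepsilon}$.
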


Finally, we introduce some fundamental lemmas. Stirling's formula implies
\begin{lemma}
For $-1<\sigma<2$ and $t\geq1$, we have
\begin{equation}
\chi(1-s)=e^{-\frac{\pi i}{4}}\left(\frac{t}{2\pi} \right)^{\sigma-\frac{1}{2}}\exp \left(it\log \frac{t}{2\pi e}\right) \left(1+O\left(\frac{1}{t}\right)\right),
\end{equation}
\begin{equation}\label{log-stirling}
\frac{\chi'}{\chi}(s)=-\log \frac{t}{2\pi}+O\left(\frac{1}{t} \right),
\end{equation}
and
\begin{equation*}
\left(\frac{\chi'}{\chi}\right)^{(k)}(s)=O_k\left(\frac{1}{t} \right).
\end{equation*}
\end{lemma}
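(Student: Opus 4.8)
The plan is to reduce the three estimates to Stirling's asymptotic expansion for $\Gamma$ applied to the symmetric form of the functional equation. From $\pi^{-s/2}\Gamma(s/2)\zeta(s)=\pi^{-(1-s)/2}\Gamma(\frac{1-s}{2})\zeta(1-s)$ one reads off $\chi(s)=\pi^{s-\frac12}\Gamma(\frac{1-s}{2})/\Gamma(\frac s2)$, hence
\begin{equation*}
\log\chi(1-s)=\left(\tfrac12-s\right)\log\pi+\log\Gamma\left(\tfrac s2\right)-\log\Gamma\left(\tfrac{1-s}{2}\right).
\end{equation*}
For $t\ge 1$ and $-1<\sigma<2$ the points $\frac s2$ and $\frac{1-s}{2}$ lie in the upper and lower half-planes with $|\mathrm{Im}|\ge\frac12$, so their arguments stay in a fixed sector away from the negative real axis; Stirling's formula $\log\Gamma(z)=(z-\tfrac12)\log z-z+\tfrac12\log 2\pi+O(1/|z|)$ therefore applies to both, uniformly in $\sigma$.

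First I would substitute Stirling into this expression. The two constants $\tfrac12\log 2\pi$ cancel, and after simplifying the coefficients (using $\frac{1-s}{2}-\frac12=-\frac s2$ and $-\frac s2+\frac{1-s}{2}=\frac12-s$) one obtains
\begin{equation*}
\log\chi(1-s)=\left(\tfrac12-s\right)\log\pi+\tfrac{s-1}{2}\log\tfrac s2+\tfrac s2\log\tfrac{1-s}{2}+\left(\tfrac12-s\right)+O\!\left(\tfrac1t\right).
\end{equation*}
The essential step is to expand the two complex logarithms. Writing $\frac s2=\frac{it}{2}(1-\frac{i\sigma}{t})$ and $\frac{1-s}{2}=-\frac{it}{2}(1-\frac{1-\sigma}{it})$ gives
\begin{equation*}
\log\tfrac s2=\log\tfrac t2+\tfrac{i\pi}{2}-\tfrac{i\sigma}{t}+O(t^{-2}),\qquad
\log\tfrac{1-s}{2}=\log\tfrac t2-\tfrac{i\pi}{2}+\tfrac{i(1-\sigma)}{t}+O(t^{-2}).
\end{equation*}
Here one must retain the $O(1/t)$ terms: multiplied by the coefficients $\frac{s-1}{2}$ and $\frac s2$, whose imaginary parts have size $t$, they produce $O(1)$ contributions, and a careless truncation would corrupt the magnitude. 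Collecting the real parts, the $\log\frac t2$ terms combine with $(\frac12-\sigma)\log\pi$ to give $(\sigma-\frac12)\log\frac{t}{2\pi}$, while all remaining real contributions cancel against $\frac12-s$; this yields the factor $(\frac{t}{2\pi})^{\sigma-\frac12}$. Collecting the imaginary parts gives $t\log\frac t2-t\log\pi-t-\frac\pi4+O(1/t)=t\log\frac{t}{2\pi e}-\frac\pi4+O(1/t)$, which produces the phase $\exp(it\log\frac{t}{2\pi e})$ and the constant $e^{-i\pi/4}$. Exponentiating and writing the error as a factor $1+O(1/t)$ establishes the first formula. The main obstacle is precisely this bookkeeping: confirming that the various $O(1)$ cross terms cancel and that the surviving constant in the phase is exactly $-\pi/4$.

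For the logarithmic derivative \eqref{log-stirling} I would avoid differentiating the asymptotic directly (its error term cannot be differentiated termwise) and instead differentiate the exact representation, obtaining
\begin{equation*}
\frac{\chi'}{\chi}(s)=\log\pi-\tfrac12\frac{\Gamma'}{\Gamma}\left(\tfrac s2\right)-\tfrac12\frac{\Gamma'}{\Gamma}\left(\tfrac{1-s}{2}\right).
\end{equation*}
Inserting $\frac{\Gamma'}{\Gamma}(z)=\log z+O(1/|z|)$ and using that the imaginary parts $\pm i\pi/2$ cancel, so that $\log\frac s2+\log\frac{1-s}{2}=2\log\frac t2+O(1/t)$, yields $\frac{\chi'}{\chi}(s)=\log\pi-\log\frac t2+O(1/t)=-\log\frac{t}{2\pi}+O(1/t)$. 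Finally, differentiating this identity $k$ times for $k\ge1$ gives
\begin{equation*}
\left(\frac{\chi'}{\chi}\right)^{(k)}(s)=-\tfrac12\left(\tfrac12\right)^{k}\left(\frac{\Gamma'}{\Gamma}\right)^{(k)}\!\left(\tfrac s2\right)-\tfrac12\left(-\tfrac12\right)^{k}\left(\frac{\Gamma'}{\Gamma}\right)^{(k)}\!\left(\tfrac{1-s}{2}\right),
\end{equation*}
and the polygamma bound $\left(\frac{\Gamma'}{\Gamma}\right)^{(m)}(z)\ll_m|z|^{-m}$ for $m\ge1$ forces $|z|^{-k}\ll t^{-k}\le t^{-1}$, giving $O_k(1/t)$. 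Throughout, uniformity in $\sigma$ is automatic, since all $\Gamma$-arguments remain in a fixed sector off the negative real axis once $t\ge1$.
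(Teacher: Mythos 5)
Your proof is correct, and it follows the same route the paper intends: the paper offers no written proof beyond the words ``Stirling's formula implies,'' and your argument is exactly a careful execution of that plan, applying Stirling's expansion (for $\log\Gamma$, the digamma, and the polygamma functions) to the Gamma-quotient representation of $\chi$. The bookkeeping you carry out — retaining the $O(1/t)$ terms in $\log\frac{s}{2}$ and $\log\frac{1-s}{2}$ because they are multiplied by factors of size $t$, and checking that the surviving constant phase is $-\pi/4$ — is precisely the content the paper leaves implicit, and it is done correctly.
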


Then, by the definition, for $-1<\sigma<2$ and $t\geq1$

\begin{equation}\label{f_k-ap}
f_k(s)=\left(\frac{1}{2}\log \frac{t}{2\pi} \right)^k+O_k\left(t^{-1}(\log t)^{k-1} \right).
\end{equation}

If RH is true, then the Lindel\"{o}f Hypohesis is also true.
Therefore we can obtain the following estimates. 
\begin{lemma}\label{lindelof}
If the RH is true, then for $\mu=0,1,2,\dots$ and $|t|\geq1$,
\begin{equation*}
\zeta^{(\mu)}(s) \ll_{\mu,\varepsilon}
\begin{cases}
1 & \text{$1< \sigma$,} \\
|t|^{\varepsilon} & \text{$\frac{1}{2}\leq \sigma \leq1$,} \\
|t|^{\frac{1}{2}-\sigma+\varepsilon} & \text{$-1<\sigma <\frac{1}{2}$.}
\end{cases}
\end{equation*}
\end{lemma}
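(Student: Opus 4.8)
The plan is to reduce everything to the Lindel\"of bound on the critical line, namely $\zeta(\tfrac12+it)\ll_\varepsilon|t|^\varepsilon$, which is a consequence of RH, and then to propagate it across the three $\sigma$-ranges and up to the derivatives by the convexity principle, the functional equation, and Cauchy's integral formula. It is convenient to establish first the single uniform estimate
\[
\zeta(\sigma+it)\ll_\varepsilon|t|^{\max(\frac12-\sigma,\,0)+\varepsilon}
\qquad(-1<\sigma<2,\ |t|\ge1)
\]
for $\zeta$ itself, and only afterwards to pass to $\zeta^{(\mu)}$.

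For the range $\sigma>1$ the Dirichlet series converges absolutely and may be differentiated term by term, giving $|\zeta(s)|\le\zeta(\sigma)$, which is $O(1)$ once $\sigma$ is bounded away from $1$; the approach to $\sigma=1$ is covered by the strip estimate of the next step. For $\tfrac12\le\sigma\le 2$ I would combine the Lindel\"of bound on $\sigma=\tfrac12$ with the trivial boundedness on $\sigma=2$ and apply the Phragm\'en--Lindel\"of convexity principle in the strip (applied to $(s-1)\zeta(s)$ so as to remove the pole at $s=1$), obtaining $\zeta(\sigma+it)\ll_\varepsilon|t|^\varepsilon$ uniformly there. For $-1<\sigma<\tfrac12$ I would use the functional equation $\zeta(s)=\chi(s)\zeta(1-s)$ together with the Stirling estimate already recorded in the text, which yields $|\chi(s)|=|\chi(1-s)|^{-1}\asymp(t/2\pi)^{\frac12-\sigma}$; since $1-\sigma>\tfrac12$, the previous case gives $\zeta(1-s)\ll_\varepsilon|t|^\varepsilon$, whence $\zeta(s)\ll_\varepsilon|t|^{\frac12-\sigma+\varepsilon}$. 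This establishes the displayed uniform bound.

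To pass to the derivatives I would invoke Cauchy's integral formula
\[
\zeta^{(\mu)}(s)=\frac{\mu!}{2\pi i}\oint_{|w-s|=r}\frac{\zeta(w)}{(w-s)^{\mu+1}}\,dw
\]
on a circle of small radius $r$. As $w$ runs over the circle its real part changes by at most $r$, so the exponent $\max(\tfrac12-\sigma,0)$ changes by at most $r$, while its imaginary part changes by $O(1)$, so that $|w|^{\varepsilon}\asymp|t|^{\varepsilon}$. Choosing $r$ small (say $r=\varepsilon$) makes $|t|^{r}\le|t|^{\varepsilon}$, and the factor $\mu!\,r^{-\mu}$ is an admissible constant depending only on $\mu$ and $\varepsilon$; relabelling $\varepsilon$ then gives $\zeta^{(\mu)}(s)\ll_{\mu,\varepsilon}|t|^{\max(\frac12-\sigma,0)+\varepsilon}$, which is exactly the three claimed bounds.

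I expect the main obstacle to be the middle strip $\tfrac12\le\sigma\le1$: one has to be certain that the sub-polynomial bound is genuinely \emph{uniform} in $\sigma$ there, which is precisely the point that forces the Phragm\'en--Lindel\"of argument, and then that transferring the estimate to the derivatives through Cauchy's formula does not degrade the exponent. The latter is what dictates taking the radius $r$ small in terms of $\varepsilon$, so that all the losses incurred along the circle (both from the varying real part and from the factor $r^{-\mu}$) can be absorbed into the final $|t|^{\varepsilon}$.
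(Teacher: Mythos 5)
Your proposal is correct and takes essentially the same route as the paper: the paper dismisses the $\mu=0$ case as ``well-known'' (its content is precisely the convexity/functional-equation argument you spell out, resting on the fact that RH implies the Lindel\"of bound on $\sigma=\tfrac12$) and then passes to $\zeta^{(\mu)}$ by Cauchy's integral formula, exactly as you do. The only cosmetic difference is the choice of radius --- the paper uses $r=(\log(|t|+2))^{-1}$, so the $\mu!\,r^{-\mu}$ loss is only a power of $\log$ and the shift in the exponent $\max(\tfrac12-\Re w,0)$ contributes a bounded factor, whereas your fixed $r=\varepsilon$ absorbs both losses into $|t|^{\varepsilon}$; either choice suffices for the stated bound.
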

When $\mu=0$, these estimates are well-known. For $\mu \geq 1$, we can obtain this estimates, using Cauchy's theorem in a disk of radius $(\log (|t|+2))^{-1}$ around $s$.

By (\ref{Z_k-binom}), this lemma leads to
\begin{equation*}
Z_k(s)\ll_{k,\varepsilon}
\begin{cases}
|t|^{\varepsilon} & \text{$\frac{1}{2}\leq \sigma<2$,} \\
|t|^{\frac{1}{2}-\sigma+\varepsilon} & \text{$-1<\sigma <\frac{1}{2}$}
\end{cases}
\end{equation*}
for $|t|\geq 1$.

Now we can show that
\begin{equation*}
\frac{Z_k'}{Z_k}(\sigma+iT)=O_k((\log T)^2)
\end{equation*}
uniformly for $-1\leq \sigma \leq 2$
by applying the following lemma
\begin{lemma}[Lemma $\alpha$ in \cite{T}]
If $f(s)$ is regular, and
\begin{equation*}
\left|\frac{f(s)}{f(s_0)} \right|<e^M \quad (M>1)
\end{equation*}
in the circle $|s-s_0|\leq r$, then
\begin{equation*}
\left|\frac{f'(s)}{f(s)}-\sum_{\rho}\frac{1}{s-\rho}\right|<\frac{AM}{r}
\quad \left(|s-s_0|\leq \frac{r}{4}\right),
\end{equation*}
where $\rho$ runs over the zeros of $f(s)$ such that $|\rho-s_0|\leq r/2$ and $A$ is an absolute positive constant.
\end{lemma}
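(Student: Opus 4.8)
The plan is to reduce the statement to the Borel--Carath\'eodory theorem after first stripping off the zeros of $f$ inside the disc by a Blaschke-type product. First I would normalise: replacing $f(s)$ by $f(s)/f(s_0)$ alters neither $f'/f$ nor the zeros $\rho$, and (note $f(s_0)\neq0$, else the hypothesis is meaningless) turns the hypothesis into $f(s_0)=1$, $|f(s)|<e^M$ on $|s-s_0|\le r$. Writing $w=s-s_0$ and $w_\nu=\rho_\nu-s_0$ for the zeros with $|w_\nu|\le r/2$ (listed with multiplicity), I would bound their number $n$ by Jensen's formula on $|w|\le r$: since $\log|f(s_0)|=0$ and the boundary mean of $\log|f|$ is $<M$, each zero in $|w_\nu|\le r/2$ contributes at least $\log(r/|w_\nu|)\ge\log 2$, whence $n<M/\log 2\ll M$.

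Next I would form
\[
g(w)=f(s_0+w)\prod_{\nu=1}^{n}\frac{r^2-\overline{w_\nu}\,w}{r(w-w_\nu)},
\]
where each factor is a Blaschke factor for the disc of radius $r$: it has a simple zero at $w=w_\nu$, modulus $1$ on $|w|=r$, and its pole $w=r^2/\overline{w_\nu}$ lies outside $|w|\le r$ because $|w_\nu|\le r/2$. Thus $g$ is regular and zero-free on $|w|\le r$, satisfies $|g(w)|=|f(s_0+w)|<e^M$ on the boundary, hence $|g(w)|<e^M$ throughout by the maximum principle, while at the centre $|g(0)|=\prod_\nu r/|w_\nu|\ge 2^n\ge 1$. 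Being zero-free on the simply connected disc, $g$ admits a holomorphic logarithm $h=\log g$, which I normalise by $\operatorname{Im}h(0)\in(-\pi,\pi]$; then $\operatorname{Re}h=\log|g|$ obeys $0\le\operatorname{Re}h(0)\le M$ and $\operatorname{Re}h\le M$ on the disc, so $|h(0)|\le M+\pi\ll M$.

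The point of the construction is the identity obtained by logarithmic differentiation,
\[
\frac{f'}{f}(s)-\sum_{\nu}\frac{1}{s-\rho_\nu}=h'(w)+\sum_{\nu}\frac{\overline{w_\nu}}{r^2-\overline{w_\nu}\,w},
\]
valid since $w-w_\nu=s-\rho_\nu$. On $|w|\le r/4$ the last sum is harmless: each term is $\ll 1/r$ because $|r^2-\overline{w_\nu}\,w|\ge r^2-(r/2)(r/4)=7r^2/8$, so the whole sum is $\ll n/r\ll M/r$. To bound $h'$ I would apply Borel--Carath\'eodory in two stages: first, on $|w|\le r/2$ it gives $|h(w)|\le 2\sup_{|z|=r}\operatorname{Re}h+3|h(0)|\ll M$; then Cauchy's estimate on a circle of radius $r/4$ about any point with $|w|\le r/4$ yields $|h'(w)|\le 4\max_{|z|\le r/2}|h(z)|/r\ll M/r$. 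Adding the two contributions gives the asserted bound $<AM/r$ with an absolute constant $A$.

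I expect the main obstacle to be the bookkeeping in the Blaschke construction---checking simultaneously that the product is regular on the closed disc (poles pushed outside by $|w_\nu|\le r/2$), bounded by $e^M$ on the boundary, and at least $1$ in modulus at the centre---since it is precisely the lower bound $|g(0)|\ge1$ that forces $\operatorname{Re}h(0)\ge0$ and thereby makes the Borel--Carath\'eodory input $\sup\operatorname{Re}h-\operatorname{Re}h(0)\le M$ clean. Everything else (Jensen's count, the maximum principle, and the Cauchy estimate for $h'$) is routine.
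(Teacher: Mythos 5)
You have reproduced a correct classical argument in outline, but there is one genuine gap as written. Your claim that $g$ is ``regular and zero-free on $|w|\le r$'' is false in general: you only strip off the zeros with $|w_\nu|\le r/2$, so any zeros of $f$ in the annulus $r/2<|w|\le r$ survive in $g$. (Also, as written, each factor $(r^2-\overline{w_\nu}\,w)/(r(w-w_\nu))$ has a simple \emph{pole} at $w=w_\nu$ --- its zero is at $r^2/\overline{w_\nu}$ --- and it is this pole that cancels the zero of $f$; your phrase ``simple zero at $w=w_\nu$'' is backwards, though the intended construction is clear.) Consequently $h=\log g$ need not extend holomorphically to the full disc, and your first Borel--Carath\'eodory application, which invokes $\sup_{|z|=r}\operatorname{Re}h$, is illegitimate as stated: Borel--Carath\'eodory requires $h$ holomorphic up to the outer radius. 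The repair is local and cheap: the maximum principle applied to the regular function $g$ on $|w|\le r$ gives $|g|<e^M$, hence $\operatorname{Re}h\le M$, on $|w|\le r/2$, where $g$ \emph{is} zero-free; now run Borel--Carath\'eodory with radii $r/2$ and $3r/8$, and then Cauchy's estimate on circles of radius $r/8$ about points of $|w|\le r/4$. With this change the rest of your argument stands: Jensen's count $n\ll M$, the modulus-one property of the factors on $|w|=r$, the lower bound $|g(0)|\ge 2^n\ge 1$, the logarithmic-derivative identity, and the bound $\left|\overline{w_\nu}/(r^2-\overline{w_\nu}\,w)\right|\le 4/(7r)$ on $|w|\le r/4$ are all correct.

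For comparison: the paper itself gives no proof --- it quotes Lemma $\alpha$ of Titchmarsh verbatim --- and Titchmarsh's own argument is leaner than yours. He sets $g(s)=f(s)/\prod_\rho(s-\rho)$ with $\rho$ running over the zeros in $|\rho-s_0|\le r/2$; on $|s-s_0|=r$ one has $|s-\rho|\ge r/2\ge|s_0-\rho|$, whence $|g(s)/g(s_0)|\le|f(s)/f(s_0)|<e^M$ on the boundary and, by the maximum principle, throughout. Applying Borel--Carath\'eodory and Cauchy to $\log\bigl(g(s)/g(s_0)\bigr)$, which vanishes at $s_0$, then finishes, because $g'/g=f'/f-\sum_\rho 1/(s-\rho)$ \emph{exactly}. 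This dispenses both with Jensen's formula and with your tail sum $\sum_\nu\overline{w_\nu}/(r^2-\overline{w_\nu}\,w)$: those are the price you pay for insisting on Blaschke factors with boundary modulus one, which is unnecessary here since only an upper bound for $|g/g(s_0)|$ on the boundary circle is needed, and the crude inequality $|s-\rho|\ge|s_0-\rho|$ already supplies it.
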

We use this lemma with $f(s)=Z_k(s), r=12$ and $s_0=2+iT$.
The estimate of $Z_k(s)$ implies that we can take $M=\log T$ in this lemma.
Hence we have
\begin{equation*}
\frac{Z_k'}{Z_k}(\sigma+iT)=\sum_{\substack{\rho \\ |\rho-(2+iT)|\leq 6}}\frac{1}{s-\rho}+O_k(\log T).
\end{equation*}
By the way of taking $T$ and Lemma \ref{M-T}, we see that
\begin{align*}
\sum_{\substack{\rho \\ |\rho-(2+iT)|\leq 6}}\frac{1}{s-\rho}
&\ll_k
\sum_{\substack{\rho \\ |\rho-(2+iT)|\leq 6}}\log T \\
&\ll_k
(\log T)^2.
\end{align*}

\section{The proof of the theorem}
Our proof is inspired by the proof of Y\i ld\i r\i m \cite{Y}.
As we mentioned before, we consider sufficiently large $T$ in $\{ T_r \}_{r=1}^{\infty}$.
This restriction will be removed at the end of the proof.
Now by Lemma \ref{M-T-1}, $Z_k(s)$ has at most $O_k(1)$ zeros off the critical line up to $T$.
At such a zero, by Lemma \ref{distance},
\begin{equation*}
|Z_j(\rho_k)|^2\ll_{j,\varepsilon} |\Im \rho_k|^{\frac{2}{9}+\varepsilon},
\end{equation*}
whence
\begin{equation*}
\sum_{\substack{0<\Im \rho_k \leq T\\ \Re \rho_k \neq \frac{1}{2}}}|Z_j(\rho_k)|^2\ll_{j,k,\varepsilon} T^{\frac{2}{9}+\varepsilon},
\end{equation*}
where $\rho_k$ is the zeros of $Z_k(s)$.
Therefore, by Lemma \ref{Z_k-fund},
\begin{align*}
\sum_{0<\gamma_k\leq T}\left|Z^{(j)}(\gamma_k) \right|^2
&=\sum_{0<\gamma_k\leq T}\left|Z_j\left(\frac{1}{2}+i\gamma_k\right) \right|^2 \\
&=\sum_{\substack{\rho_k\\0<\Im \rho_k \leq T}}\left|Z_j(\rho_k) \right|^2+O_{j,k,\varepsilon}(T^{\frac{2}{9}+\varepsilon}) \\
&=M(T)+O_{j,k,\varepsilon}(T^{\frac{2}{9}+\varepsilon}),
\end{align*}
say.
For the convenience, we consider a sum over shorter range.
Let
\begin{equation*}
U=T^{\frac{3}{4}}
\end{equation*}
and let $R$ be the positively oriented rectangular path with vertices $c+iT$, $c+i(T+U)$, $1-c+i(T+U)$ and $1-c+iT$, where $c=\frac{5}{8}$.
Then we need to consider
\begin{equation*}
M(T+U)-M(T)=\frac{1}{2\pi i}\int_{R}\frac{Z_k'}{Z_k}(s)Z_j(s)Z_j(1-s)ds.
\end{equation*}

On the horizontal line, since
\begin{equation*}
\frac{Z_k'}{Z_k}(s)\ll_{k,\varepsilon} T^{\varepsilon} \quad \text{and} \quad Z_k(s)Z_k(1-s)\ll_{k,\varepsilon}T^{c-\frac{1}{2}+\varepsilon},
\end{equation*}
we can see that
\begin{equation*}
\int_{1-c+iT}^{c+iT}\frac{Z_k'}{Z_k}(s)Z_j(s)Z_j(1-s)ds\ll_{k,\varepsilon}T^{c-\frac{1}{2}+\varepsilon}.
\end{equation*}
Thus
\begin{align*}
&\quad
\frac{1}{2\pi i}\int_{R}\frac{Z_k'}{Z_k}(s)Z_j(s)Z_j(1-s)ds \\
&=\frac{1}{2\pi i}\int_{c+iT}^{c+i(T+U)}\frac{Z_k'}{Z_k}(s)Z_j(s)Z_j(1-s)ds \\
&\quad +\frac{1}{2\pi i}\int_{1-c+i(T+U)}^{1-c+iT}\frac{Z_k'}{Z_k}(s)Z_j(s)Z_j(1-s)ds+O_{j,k,\varepsilon}\left(T^{c-\frac{1}{2}+\varepsilon} \right) \\
&= I_{1}+I_{2}+O_{j,k,\varepsilon}\left(T^{c-\frac{1}{2}+\varepsilon} \right),
\end{align*}
say.
On the integral $I_2$,
\begin{align*}
I_2&= -\frac{1}{2\pi i}\int_{1-c+iT}^{1-c+i(T+U)}\frac{Z_k'}{Z_k}(s)Z_j(s)Z_j(1-s)ds \\
    &= -\frac{1}{2\pi i}\int_{1-c+iT}^{1-c+i(T+U)}\left(\frac{\chi'}{\chi}(s)-\frac{Z_k'}{Z_k}(1-s)\right)Z_j(s)Z_j(1-s)ds \\
    &= -\frac{1}{2\pi i}\int_{1-c+iT}^{1-c+i(T+U)}\frac{\chi'}{\chi}(s)Z_j(s)Z_j(1-s)ds \\
    &\quad +\frac{1}{2\pi i}\int_{1-c+iT}^{1-c+i(T+U)}\frac{Z_k'}{Z_k}(1-s)Z_j(s)Z_j(1-s)ds.
\end{align*}
When we replace $s$ by $1-s$, the second integral is 
\begin{equation*}
-\frac{1}{2\pi i}\int_{c-iT}^{c-i(T+U)}\frac{Z_k'}{Z_k}(s)Z_j(s)Z_j(1-s)ds= \overline{I_1}.
\end{equation*}
Now we see that
\begin{align*}
&\quad M(T+U)-M(T) \\
&=-\frac{1}{2\pi i}\int_{1-c+iT}^{1-c+i(T+U)}\frac{\chi'}{\chi}(s)Z_j(s)Z_j(1-s)ds+2\Re{I_1}+O_{j,k,\varepsilon}\left(T^{c-\frac{1}{2}+\varepsilon} \right).
\end{align*}

We divide the following argument into 5 steps;
\begin{description}
\item[Step 1]\mbox{}\\
Calculate the integral
\begin{equation*}
-\frac{1}{2\pi i}\int_{1-c+iT}^{1-c+i(T+U)}\frac{\chi'}{\chi}(s)Z_j(s)Z_j(1-s)ds,
\end{equation*}

\item[Step 2]\mbox{}\\
Transform the integral $I_1$ to certain sums of arithmetic functions,

\item[Step 3]\mbox{}\\
To derive some approximate formula for those sums by Perron's formula,

\item[Step 4]\mbox{}\\
Express $I_1$ with that formula and simplify the coefficients,

\item[Step 5]\mbox{}\\
Concluding.
\end{description}

\subsection*{Step 1}
By Cauchy's integral theorem, the integral is equal to
\begin{equation*}
-\frac{1}{2\pi i}\int_{\frac{1}{2}+iT}^{\frac{1}{2}+i(T+U)}\frac{\chi'}{\chi}(s)Z_j(s)Z_j(1-s)ds+O_{j,\varepsilon}\left(T^{c-\frac{1}{2}+\varepsilon} \right).
\end{equation*}
From (\ref{log-stirling}) and Lemma \ref{lindelof} we see that the above integral is
\begin{equation}\label{mean-value}
\frac{1}{2\pi} \int_{T}^{T+U}\log \frac{t}{2\pi} Z^{(j)}(t)^2dt +O_{j,\varepsilon}\left(T^{\varepsilon} \right).
\end{equation}
Here we put
\begin{equation*}
Y_j(t)=\int_{1}^{t}Z^{(j)}(x)^2dx.
\end{equation*}
Using integration by parts and the result of Minamide and Tanigawa, we can show that the integral in (\ref{mean-value}) is equal to
\begin{equation*}
\begin{split}
&\quad \frac{1}{2\pi} \log \frac{T+U}{2\pi}Y_j(T+U)-\frac{1}{2\pi} \log \frac{T}{2\pi}Y_j(T)-\frac{1}{2\pi}\int_{T}^{T+U}t^{-1}Y_j(t)dt \\
&=\frac{T+U}{2\cdot 4^j(2j+1)\pi}P_{2j+1}\left(\log \frac{T+U}{2\pi}\right)\log \frac{T+U}{2\pi} \\
&\quad -\frac{T}{2\cdot 4^j(2j+1)\pi}P_{2j+1}\left(\log \frac{T}{2\pi}\right)\log \frac{T}{2\pi} \\
&\quad -\frac{1}{2\cdot 4^j(2j+1)\pi}\int_{T}^{T+U}\left \{ P_{2j+1}\left(\log \frac{t}{2\pi}\right)+O(t^{-\frac{1}{2}}\log^{2j+1}t)\right \}dt \\
&\quad +O_j(T^{\frac{1}{2}}\log^{2j+1}T) \\
&=\frac{U}{2\cdot 4^j(2j+1)\pi}\left(\log \frac{T}{2\pi}\right)^{2j+2}+O_{j}\left(U(\log T)^{2j+1} \right),
\end{split}
\end{equation*}
because
\begin{equation*}
\log \frac{T+U}{2\pi}=\log \frac{T}{2\pi}\left(1+O\left(\frac{U}{T\log T}\right)\right).
\end{equation*}

\subsection*{Step 2}
We calculate $I_1$. By the functional equation (\ref{fe-Z_k}) and Lemma \ref{app-log-der}, we have
\begin{align*}
I_1
&=\frac{(-1)^j}{2\pi i}\int_{c+iT}^{c+i(T+U)}\frac{Z_k'}{Z_k}(s)Z_j(s)^2\chi(1-s)ds \\
&=\frac{(-1)^j}{2\pi i}\int_{c+iT}^{c+i(T+U)}\frac{\mathscr{Z}_k'}{\mathscr{Z}_k}(s,T)Z_j(s)^2\chi(1-s)ds+O_{j,k,\varepsilon}\left(U^2T^{c-\frac{3}{2}+\varepsilon} \right).
\end{align*}
The representation of (\ref{Z_k-binom}) and the approximation of $f_k(s)$ (\ref{f_k-ap}) imply that the above is
\begin{align*}
&=\frac{(-1)^j}{2\pi i}\int_{c+iT}^{c+i(T+U)}\frac{\mathscr{Z}_k'}{\mathscr{Z}_k}(s,T)\left(\sum_{\mu=0}^{j}\binom{j}{\mu}f_{j-\mu}(s)\zeta^{(\mu)}(s) \right)^2\chi(1-s)ds \\
&\quad+O_{j,k,\varepsilon}\left(U^2T^{c-\frac{3}{2}+\varepsilon} \right) \\
&=\frac{(-1)^j}{2\pi i}\int_{c+iT}^{c+i(T+U)}\frac{\mathscr{Z}_k'}{\mathscr{Z}_k}(s,T)\left(\sum_{\mu=0}^{j}\binom{j}{\mu}\left(\frac{1}{2}\log \frac{t}{2\pi} \right)^{j-\mu}\zeta^{(\mu)}(s) \right)^2\chi(1-s)ds \\
&\quad +O_{j,k,\varepsilon}\left(U^2T^{c-\frac{3}{2}+\varepsilon} \right)+O_{j,k,\varepsilon}\left(T^{c-\frac{1}{2}+\varepsilon} \right) \\
&=\frac{(-1)^j}{2\pi i}\int_{b+iT}^{b+i(T+U)}\frac{\mathscr{Z}_k'}{\mathscr{Z}_k}(s,T)\left(\sum_{\mu=0}^{j}\binom{j}{\mu}\left(\frac{1}{2}\log \frac{t}{2\pi} \right)^{j-\mu}\zeta^{(\mu)}(s) \right)^2\chi(1-s)ds \\
&\quad +O_{j,k,\varepsilon}\left(U^2T^{c-\frac{3}{2}+\varepsilon} \right)+O_{j,k,\varepsilon}\left(T^{b-\frac{1}{2}+\varepsilon} \right),
\end{align*}
where $b=\frac{9}{8}$.
To show the last equality, we use Cauchy's integral theorem.
We note that
\begin{equation}\label{square}
\begin{split}
&\quad \left(\sum_{\mu=0}^{j}\binom{j}{\mu}\left(\frac{1}{2}\log \frac{t}{2\pi} \right)^{j-\mu}\zeta^{(\mu)}(s) \right)^2 \\
&=\sum_{\mu=0}^{j}\sum_{\nu=0}^{j}\binom{j}{\mu}\binom{j}{\nu}\left(\frac{1}{2}\log \frac{t}{2\pi} \right)^{2j-\mu-\nu}\zeta^{(\mu)}(s)\zeta^{(\nu)}(s).
\end{split}
\end{equation}

Therefore, by Lemma \ref{G'slem}, our problem is reduced to consider
\begin{equation}\label{arithmetic_sum}
\begin{split}
&\quad
\frac{1}{2\pi i}\int_{b+iT}^{b+i(T+U)}\frac{\mathscr{Z}_k'}{\mathscr{Z}_k}(s,T)\zeta^{(\mu)}(s)\zeta^{(\nu)}(s)\chi(1-s)\left(\log \frac{t}{2\pi} \right)^{2j-\mu-\nu}ds \\
&=
\frac{1}{2\pi i}\int_{b+iT}^{b+i(T+U)}\sum_{m=1}^{\infty}\frac{a_k(m)}{m^s}\zeta^{(\mu)}(s)\zeta^{(\nu)}(s)\chi(1-s)\left(\log \frac{t}{2\pi} \right)^{2j-\mu-\nu}ds \\
&\quad
+O_{\mu,\nu,k,\varepsilon}\left(T^{b-\frac{1}{2}+\varepsilon} \right) \\
&=
\sum_{\frac{T}{2\pi}\leq mn\leq \frac{T+U}{2\pi}}a_k(m)D_{\mu \nu}(n)(\log mn)^{2j-\mu-\nu}+O_{\mu,\nu,k,\varepsilon}\left(T^{b-\frac{1}{2}+\varepsilon} \right),
\end{split}
\end{equation}
where $D_{\mu \nu}(n)$ satisfies

\begin{equation*}
\zeta^{(\mu)}(s)\zeta^{(\nu)}(s)=\sum_{n=1}^{\infty}\frac{D_{\mu \nu}(n)}{n^s}
\end{equation*}
for $\sigma>1$.
If we can calculate the sum
\begin{equation*}
\sum_{mn\leq x}a_k(m)D_{\mu \nu}(n),
\end{equation*}
then by partial summation we are able to compute the sum on the right-hand side in (\ref{arithmetic_sum}).

\subsection*{Step 3}
By Perron's formula,
\begin{equation*}
\sum_{mn\leq x}a_k(m)D_{\mu \nu}(n)=\frac{1}{2\pi i}\int_{b-iT}^{b+iT}\sum_{m=1}^{\infty}\frac{a_k(m)}{m^s}\zeta^{(\mu)}(s)\zeta^{(\nu)}(s)\frac{x^s}{s}ds+O(x^{\varepsilon})+R,
\end{equation*}
where $R$ is the error term appearing in Perron's formula (see \cite[p.140]{Mon&Vau}) which satisfies that
\begin{align*}
R
&\ll
\sum_{\substack{x/2<mn<2x \\ mn \neq x}} |a_k(m)D_{\mu \nu}(n)| \min \left(1, \frac{x}{T|x-mn|}\right) \\
&\quad
+\frac{(4x)^{b}}{T}\sum_{mn=1}^{\infty}\frac{|a_k(m)D_{\mu \nu}(n)|}{(mn)^{b}}.
\end{align*}
On the first term, we see that
\begin{align*}
&\quad
\sum_{\substack{x/2<mn<2x \\ n \neq x}} |a_k(m)D_{\mu \nu}(n)| \min \left(1, \frac{x}{T|x-mn|}\right) \\
&\ll
\frac{x}{T}\sum_{x/2<mn<x-1}\left|\frac{a_k(m)D_{\mu \nu}(n)}{x-mn} \right|+\sum_{x-1\leq mn\leq x+1} |a_k(m)D_{\mu \nu}(n)| \\
&\quad
+\frac{x}{T}\sum_{x+1<mn<2x}\left|\frac{a_k(m)D_{\mu \nu}(n)}{x-mn} \right| \\
&=
\frac{x}{T}\sum_{x/2<l<x-1}\sum_{l=mn}\left|\frac{a_k(m)D_{\mu \nu}(n)}{x-l} \right|+\sum_{x-1\leq l\leq x+1}\sum_{l=mn} |a_k(m)D_{\mu \nu}(n)| \\
&\quad
+\frac{x}{T}\sum_{x+1<l<2x}\sum_{l=mn}\left|\frac{a_k(m)D_{\mu \nu}(n)}{x-l} \right| \\
&\ll_{\mu,\nu}
\frac{x^{1+\varepsilon}}{T}\sum_{x/2<l<x-1}\sum_{l=mn}\frac{1}{x-l}+x^{\varepsilon}\sum_{x-1\leq l\leq x+1}\sum_{l=mn} 1 \\
&\quad
+\frac{x^{1+\varepsilon}}{T}\sum_{x+1<l<2x}\sum_{l=mn}\frac{1}{l-x} \\
&=
\frac{x^{1+\varepsilon}}{T}\sum_{x/2<l<x-1}\frac{d(l)}{x-l}+x^{\varepsilon}\sum_{x-1\leq l\leq x+1}d(l)+\frac{x^{1+\varepsilon}}{T}\sum_{x+1<l<2x}\frac{d(l)}{l-x} \\
&\ll_{\varepsilon}
\frac{x^{1+\varepsilon}}{T}\sum_{x/2<l<x-1}\frac{1}{x-l}+x^{\varepsilon}+\frac{x^{1+\varepsilon}}{T}\sum_{x+1<l<2x}\frac{1}{l-x} \\
&\ll
\frac{x^{1+\varepsilon}}{T}\sum_{1<l<x}\frac{1}{l}+x^{\varepsilon} \ll \frac{x^{1+\varepsilon}}{T}+x^{\varepsilon}.
\end{align*}
Therefore we obtain
\begin{equation*}
R\ll_{\mu,\nu,\varepsilon} \frac{x^{b}}{T}+x^{\varepsilon}.
\end{equation*}

By using Lemmas \ref{poles-zeros}, \ref{Dirichlet-app} and the residue theorem, we have
\begin{align*}
&\quad
\frac{1}{2\pi i}\int_{b-iT}^{b+iT}\sum_{m=1}^{\infty}\frac{a_k(m)}{m^s}\zeta^{(\mu)}(s)\zeta^{(\nu)}(s)\frac{x^s}{s}ds \\
&=
\frac{1}{2\pi i}\int_{b-iT}^{b+iT}\frac{\mathscr{Z}_k'}{\mathscr{Z}_k}(s,T)\zeta^{(\mu)}(s)\zeta^{(\nu)}(s)\frac{x^s}{s}ds+O_{\mu,\nu,\varepsilon}(x^{b}T^{-1+\varepsilon}) \\
&=
\underset{s=1}{\mathrm {Res}}\ \frac{\mathscr{Z}_k'}{\mathscr{Z}_k}(s,T)\zeta^{(\mu)}(s)\zeta^{(\nu)}(s)\frac{x^s}{s}+\sum_{g=1}^{k}\underset{s=z_g}{\mathrm {Res}}\ \frac{\mathscr{Z}_k'}{\mathscr{Z}_k}(s,T)\zeta^{(\mu)}(s)\zeta^{(\nu)}(s)\frac{x^s}{s} \\
&\quad
+\frac{1}{2\pi i}\int_{c-iT}^{c+iT}\frac{\mathscr{Z}_k'}{\mathscr{Z}_k}(s,T)\zeta^{(\mu)}(s)\zeta^{(\nu)}(s)\frac{x^s}{s}ds+O_{\mu,\nu,\varepsilon}(x^{b}T^{-1+\varepsilon}) \\
&=
\underset{s=1}{\mathrm {Res}}\ \frac{\mathscr{Z}_k'}{\mathscr{Z}_k}(s,T)\zeta^{(\mu)}(s)\zeta^{(\nu)}(s)\frac{x^s}{s}+\sum_{g=1}^{k}\underset{s=z_g}{\mathrm {Res}}\ \frac{\mathscr{Z}_k'}{\mathscr{Z}_k}(s,T)\zeta^{(\mu)}(s)\zeta^{(\nu)}(s)\frac{x^s}{s} \\
&\quad
+O_{\mu,\nu,\varepsilon}(x^{c}T^{\varepsilon}+x^{b}T^{-1+\varepsilon}).
\end{align*}

To calculate the residues, we note that
\begin{equation*}
\frac{\mathscr{Z}_k'}{\mathscr{Z}_k}(s,T)=\frac{-(k+1)}{s-1}+\sum_{g=1}^{k}\frac{1}{s-z_g}+W(s,T),
\end{equation*}
\begin{equation*}
\frac{x^s}{s}=x\sum_{l=0}^{\infty}\left(\sum_{r=0}^{l}\frac{(-1)^r}{(l-r)!}(\log y)^{l-r} \right)(s-1)^l
\end{equation*}
and
\begin{equation*}
\zeta^{(\mu)}(s)=\frac{(-1)^{\mu}\mu !}{(s-1)^{\mu+1}}+\sum_{n=\mu}^{\infty}\frac{n!}{(n-\mu)!}c_n(s-1)^{n-\mu},
\end{equation*}
where $c_n$ is the $n$-th Stieltjes constant as in (\ref{c_h}).

On the residue at $s=z_g$, we have
\begin{align*}
&\quad
\sum_{g=1}^{k}\underset{s=z_g}{\mathrm {Res}}\ \frac{\mathscr{Z}_k'}{\mathscr{Z}_k}(s,T)\zeta^{(\mu)}(s)\zeta^{(\nu)}(s)\frac{x^s}{s} \\
&=
\sum_{g=1}^{k}\zeta^{(\mu)}(z_g)\zeta^{(\nu)}(z_g)\frac{x^{z_g}}{z_g} \\
&=
\sum_{g=1}^{k}\frac{x^{z_g}}{z_g} \left \{ \frac{(-1)^{\mu+\nu}\mu !\nu !}{(z_g-1)^{\mu+\nu+2}}+(-1)^{\mu} \mu !\sum_{n=\mu}^{\infty}\frac{n!}{(n-\mu)!}c_n(z_g-1)^{n-\mu-\nu-1} \right. \\
&
\left. \hspace{2cm}+(-1)^{\mu}\mu !\sum_{m=\nu}^{\infty}\frac{m!}{(m-\nu)!}c_m(z_g-1)^{m-\mu-\nu-1} \right. \\
&
\left. \hspace{2cm}+\sum_{n=\mu}^{\infty}\sum_{m=\nu}^{\infty}\frac{n!m!c_nc_m}{(n-\mu)!(m-\nu)!}(z_g-1)^{m+n-\mu-\nu} \right \},
\end{align*}
because
\begin{align*}
\zeta^{(\mu)}(s)\zeta^{(\nu)}(s)
&=
\frac{(-1)^{\mu+\nu}\mu !\nu !}{(s-1)^{\mu+\nu+2}}+(-1)^{\nu}\nu !\sum_{n=\mu}^{\infty}\frac{n!}{(n-\mu)!}c_n(s-1)^{n-\mu-\nu-1} \\
&\quad
+(-1)^{\mu}\mu !\sum_{m=\nu}^{\infty}\frac{m!}{(m-\nu)!}c_m(s-1)^{m-\mu-\nu-1} \\
&\quad
+\sum_{n=\mu}^{\infty}\sum_{m=\nu}^{\infty}\frac{n!m!c_nc_m}{(n-\mu)!(m-\nu)!}(s-1)^{m+n-\mu-\nu}.
\end{align*}

Next we consider the residue at $s=1$. We see that
\begin{align*}
&\quad
\underset{s=1}{\mathrm {Res}}\ \frac{\mathscr{Z}_k'}{\mathscr{Z}_k}(s,T)\zeta^{(\mu)}(s)\zeta^{(\nu)}(s)\frac{x^s}{s} \\
&=
-\underset{s=1}{\mathrm {Res}}\frac{k+1}{s-1}\zeta^{(\mu)}(s)\zeta^{(\nu)}(s)\frac{x^s}{s}+\underset{s=1}{\mathrm {Res}}\ \sum_{g=1}^{k}\frac{1}{s-z_g}\zeta^{(\mu)}(s)\zeta^{(\nu)}(s)\frac{x^s}{s} \\
&\quad
+\underset{s=1}{\mathrm {Res}}\ W(s,T)\zeta^{(\mu)}(s)\zeta^{(\nu)}(s)\frac{x^s}{s}=R_1+R_2+R_3,
\end{align*}
say. Since
\begin{align*}
&\quad
\zeta^{(\mu)}(s)\zeta^{(\nu)}(s)\frac{x^s}{s} \\
&=
(-1)^{\mu+\nu}\mu !\nu !x\sum_{l=0}^{\infty}\left(\sum_{r=0}^{l}\frac{(-1)^r}{(l-r)!}(\log x)^{l-r} \right)(s-1)^{l-\mu-\nu-2} \\
&\quad
+(-1)^{\nu}\nu !x\sum_{n=\mu}^{\infty}\sum_{l=0}^{\infty}\frac{n!}{(n-\mu)!}c_n\left(\sum_{r=0}^{l}\frac{(-1)^r}{(l-r)!}(\log x)^{l-r}\right)(s-1)^{l+n-\mu-\nu-1} \\
&\quad
+(-1)^{\mu}\mu !x\sum_{m=\nu}^{\infty}\sum_{l=0}^{\infty}\frac{m!}{(m-\nu)!}c_m\left(\sum_{r=0}^{l}\frac{(-1)^r}{(l-r)!}(\log x)^{l-r}\right)(s-1)^{l+m-\mu-\nu-1} \\
&\quad
+x\sum_{n=\mu}^{\infty}\sum_{m=\nu}^{\infty}\sum_{l=0}^{\infty}\frac{n!m!c_n c_m}{(n-\mu)!(m-\nu)!}\left(\sum_{r=0}^{l}\frac{(-1)^r}{(l-r)!}(\log x)^{l-r}\right)(s-1)^{l+m+n-\mu-\nu},
\end{align*}
we have
\begin{align*}
R_1
&=
(-1)^{\mu+\nu+1}(k+1)\mu !\nu !x\sum_{r=0}^{\mu+\nu+2}\frac{(-1)^r}{(\mu+\nu+2-r)!}(\log x)^{\mu+\nu+2-r} \\
&\quad
+(-1)^{\nu+1}(k+1)\nu !x\sum_{n=\mu}^{\mu+\nu+1}\sum_{l=0}^{\mu+\nu+1-n}\frac{n!}{(n-\mu)!}c_n\sum_{r=0}^{l}\frac{(-1)^r}{(l-r)!}(\log x)^{l-r} \\
&\quad
+(-1)^{\mu+1}(k+1)\mu !x\sum_{m=\nu}^{\mu+\nu+1}\sum_{l=0}^{\mu+\nu+1-m}\frac{m!}{(m-\nu)!}c_m\sum_{r=0}^{l}\frac{(-1)^r}{(l-r)!}(\log x)^{l-r} \\
&\quad
-(k+1)\mu !\nu !c_{\mu}c_{\nu}x.
\end{align*}
We emphasise that the largest term is
\begin{equation*}
(k+1)\frac{(-1)^{\mu+\nu+1}\mu !\nu !}{(\mu+\nu+2)!}x(\log x)^{\mu+\nu+2}.
\end{equation*}
As for $R_2$,
\begin{align*}
\frac{1}{s-z_g}\zeta^{(\mu)}(s)\zeta^{(\nu)}(s)\frac{x^s}{s}
&=
(-1)^{\mu+\nu}\mu !\nu !x\sum_{\lambda=0}^{\infty}\sum_{l=0}^{\infty}\frac{(-1)^{\lambda}}{(1-z_g)^{\lambda+1}} \\
&\quad
\times \left(\sum_{r=0}^{l}\frac{(-1)^r}{(l-r)!}(\log x)^{l-r} \right)(s-1)^{\lambda+l-\mu-\nu-2} \\
&\quad
+(-1)^{\mu}\mu !x\sum_{\lambda=0}^{\infty}\sum_{n=\mu}^{\infty}\sum_{l=0}^{\infty}\frac{(-1)^{\lambda}n!}{(n-\mu)!(1-z_g)^{\lambda+1}}c_n \\
&\quad
\times \left(\sum_{r=0}^{l}\frac{(-1)^r}{(l-r)!}(\log x)^{l-r}\right)(s-1)^{\lambda+l+n-\mu-\nu-1} \\
&\quad
+(-1)^{\nu}\nu !x\sum_{\lambda=0}^{\infty}\sum_{m=\nu}^{\infty}\sum_{l=0}^{\infty}\frac{(-1)^{\lambda}m!}{(m-\nu)!(1-z_g)^{\lambda+1}}c_m \\
&\quad
\times \left(\sum_{r=0}^{l}\frac{(-1)^r}{(l-r)!}(\log x)^{l-r}\right)(s-1)^{\lambda+l+m-\mu-\nu-1} \\
&\quad
+x\sum_{\lambda=0}^{\infty}\sum_{n=\mu}^{\infty}\sum_{m=\nu}^{\infty}\sum_{l=0}^{\infty}\frac{(-1)^{\lambda}n!m!c_n c_m}{(n-\mu)!(m-\nu)!(1-z_g)^{\lambda+1}} \\
&\quad
\times \left(\sum_{r=0}^{l}\frac{(-1)^r}{(l-r)!}(\log x)^{l-r}\right)(s-1)^{\lambda+l+m+n-\mu-\nu},
\end{align*}
because
\begin{equation*}
\frac{1}{s-z_g}=\sum_{\lambda=0}^{\infty}\frac{(-1)^{\lambda}}{(1-z_g)^{\lambda+1}}(s-1)^{\lambda}.
\end{equation*}
Thus we have
\begin{align*}
&\quad
R_2 \\
&=
(-1)^{\mu+\nu}\mu !\nu !x\sum_{g=1}^{k}\sum_{\substack{\lambda+l=\mu+\nu+1 \\ 0\leq \lambda,l}}\frac{(-1)^{\lambda}}{(1-z_g)^{\lambda+1}}\left(\sum_{r=0}^{l}\frac{(-1)^r}{(l-r)!}(\log x)^{l-r} \right) \\
&\quad
+(-1)^{\mu}\mu !x\sum_{g=1}^{k}\sum_{\substack{\lambda+l+n=\mu+\nu \\ 0\leq \lambda,l \\ \mu \leq n}}\frac{(-1)^{\lambda}n!c_n}{(n-\mu)!(1-z_g)^{\lambda+1}}\left(\sum_{r=0}^{l}\frac{(-1)^r}{(l-r)!}(\log x)^{l-r}\right) \\
&\quad
+(-1)^{\nu}\nu !x\sum_{g=1}^{k}\sum_{\substack{\lambda+l+m=\mu+\nu \\ 0\leq \lambda,l \\ \nu \leq m}}\frac{(-1)^{\lambda}m!c_m}{(m-\nu)!(1-z_g)^{\lambda+1}}\left(\sum_{r=0}^{l}\frac{(-1)^r}{(l-r)!}(\log x)^{l-r}\right).
\end{align*}
The main term in our final result will come from the first sum with $r=0$, namely,
\begin{align*}
&\quad
(-1)^{\mu+\nu}\mu !\nu !x\sum_{g=1}^{k}\sum_{\substack{\lambda+l=\mu+\nu+1 \\ 0\leq \lambda,l}}\frac{(-1)^{\lambda}}{(1-z_g)^{\lambda+1}}\frac{(\log x)^{l}}{l!} \\
&=
(-1)^{\mu+\nu+1}\mu !\nu !x\sum_{g=1}^{k}\sum_{\lambda=0}^{\mu+\nu+1}\frac{1}{(\mu+\nu+1-\lambda)!}\frac{(\log x)^{\mu+\nu+1-\lambda}}{(z_g-1)^{\lambda+1}}.
\end{align*}

Since we can see that
\begin{equation*}
W(s,T)=\sum_{\lambda_1=0}^{\infty}\frac{W^{(\lambda_1)}(1,T)}{\lambda_1!}(s-1)^{\lambda_1},
\end{equation*}
in a similar manner,
\begin{align*}
&\quad
R_3 \\
&=
(-1)^{\mu+\nu}\mu !\nu !x\sum_{\substack{\lambda_1+l=\mu+\nu+1 \\ 0\leq \lambda_1,l}}\frac{W^{(\lambda_1)}(1,T)}{\lambda_1!}\left(\sum_{r=0}^{l}\frac{(-1)^r}{(l-r)!}(\log x)^{l-r} \right) \\
&\quad
+(-1)^{\mu}\mu !x\sum_{\substack{\lambda_1+l+n=\mu+\nu \\ 0\leq \lambda_1,l \\ \mu \leq n}}\frac{W^{(\lambda_1)}(1,T)n!c_n}{(n-\mu)!\lambda_1!}\left(\sum_{r=0}^{l}\frac{(-1)^r}{(l-r)!}(\log x)^{l-r}\right) \\
&\quad
+(-1)^{\nu}\nu !x\sum_{\substack{\lambda_1+l+m=\mu+\nu \\ 0\leq \lambda_1,l \\ \nu \leq m}}\frac{W^{(\lambda_1)}(1,T)m!c_m}{(m-\nu)!\lambda_1!}\left(\sum_{r=0}^{l}\frac{(-1)^r}{(l-r)!}(\log x)^{l-r}\right).
\end{align*}
We note that the order of $R_3$ is at least $x(\log x)^{\mu+\nu+1}$.

From the above computations, we obtain
\begin{align*}
&\quad \sum_{mn\leq x}a_k(m)D_{\mu \nu}(n) \\
&=(-1)^{\mu+\nu+1}\frac{\mu !\nu !}{(\mu+\nu+2)!}(k+1)x(\log x)^{\mu+\nu+2} \\
&\quad+(-1)^{\mu+\nu+1}\mu !\nu !x\sum_{g=1}^{k}\sum_{\lambda=0}^{\mu+\nu+1}\frac{1}{(\mu+\nu+\lambda-l)!}\frac{(\log x)^{\mu+\nu+1-\lambda}}{(z_g-1)^{\lambda+1}} \\
&\quad+\sum_{g=1}^{k}\zeta^{(\mu)}(z_g)\zeta^{(\nu)}(z_g)\frac{x^{z_g}}{z_g}+x\sum_{\lambda=1}^{\mu+\nu+1}C_{\mu,\nu}'(\lambda)(\log x)^{\mu+\nu+2-\lambda} \\
&\quad+x\sum_{\lambda_1=1}^{\mu+\nu}\sum_{g=1}^{k}\sum_{\lambda=0}^{\mu+\nu+1-\lambda_1}C_{\mu,\nu}''(\lambda, \lambda_1)\frac{(\log x)^{\mu+\nu+1-\lambda_1-\lambda}}{(z_g-1)^{\lambda+1}} \\
&\quad+O_{\mu,\nu,k,\varepsilon}((x^c+x^bT^{-1})T^{\varepsilon}(\log x)^{\mu+\nu+1}),
\end{align*}
where $C_{\mu,\nu}'(\lambda)$ and $C_{\mu,\nu}''(\lambda_1)$ are some constants.

This leads to
\begin{align*}
&\quad \sum_{\frac{T}{2\pi}\leq mn\leq \frac{T+U}{2\pi}}a_k(m)D_{\mu \nu}(n)(\log mn)^{2j-\mu-\nu} \\
&=(-1)^{\mu+\nu+1}\frac{\mu !\nu !}{(\mu+\nu+2)!}(k+1)\frac{U}{2\pi}\left(\log \frac{T}{2\pi} \right)^{2j+2} \\
&\quad +(-1)^{\mu+\nu+1}\mu !\nu !\frac{U}{2\pi}\sum_{g=1}^{k}\sum_{\lambda=0}^{\mu+\nu+1}\frac{1}{(\mu+\nu+1-\lambda)!}\frac{(\log \frac{T}{2\pi})^{2j+1-\lambda}}{(z_g-1)^{\lambda+1}} \\
&\quad +(-1)^{\mu+\nu}\mu !\nu !\frac{U}{2\pi}\left(\log \frac{T}{2\pi}\right)^{2j-\mu-\nu}\sum_{g=1}^{k}\frac{\left(\frac{T}{2\pi} \right)^{z_g-1}}{(z_g-1)^{\mu+\nu+2}} \\
&\quad+O_{\mu,\nu,k}\left(U(\log T)^{2j+1} \right).
\end{align*}

To deduce the last main term, we used that
\begin{align*}
(T+U)^{z_g}-T^{z_g}
&=
T^{z_g}\left( \left(1+\frac{U}{T} \right)^{z_g}-1 \right) \\
&=
z_gUT^{z_g-1}+O_k(U^2|T^{z_g-2}|) \\
&=
UT^{z_g-1}+O_k(U(\log T)^{-1}),
\end{align*}

\begin{equation*}
\frac{1}{z_g}=\frac{1}{1-\frac{2}{L}\theta_g+O(L^{-2})}=1+O_k(L^{-1})
\end{equation*}
and
\begin{equation*}
\frac{1}{(z_g-1)^{\lambda}}=\frac{1}{(-2\nu_gL^{-1}+O_k(L^{-2}))^{\lambda}}=(-2\nu_gL^{-1})^{-\lambda}+O_k(L^{\lambda-1})
\end{equation*}
for positive integer $\lambda$. because
\begin{equation*}
z_g=1-\frac{2}{L}\theta_g+O_k(L^{-2}),
\end{equation*}
where $L=\log \frac{T}{2\pi}$.

\subsection*{Step 4}
From the previous steps, recalling (\ref{square}), we obtain
\begin{align*}
I_1
&=(-1)^{j+1}(k+1)\frac{U}{2\pi}\left(\log \frac{T}{2\pi} \right)^{2j+2}\sum_{\mu=0}^{j}\sum_{\nu=0}^{j}\binom{j}{\mu}\binom{j}{\nu}\frac{\mu !\nu !}{(\mu+\nu+2)!}\left(-\frac{1}{2} \right)^{2j-\mu-\nu} \\
&\quad +(-1)^{j+1}\frac{U}{2\pi}\left(\log \frac{T}{2\pi} \right)^{2j+1}\sum_{\mu=0}^{j}\sum_{\nu=0}^{j}\binom{j}{\mu}\binom{j}{\nu}\mu !\nu !\left(-\frac{1}{2}\right)^{2j-\mu-\nu} \\
&\quad \times \sum_{g=1}^{k}\frac{1}{z_g-1}\sum_{\lambda=0}^{\mu+\nu+1}\frac{1}{(\mu+\nu+1-\lambda)!}\frac{(\log \frac{T}{2\pi})^{-\lambda}}{(z_g-1)^{\lambda}} \\
&\quad +(-1)^{j}\frac{U}{2\pi}\sum_{\mu=0}^{j}\sum_{\nu=0}^{j}\binom{j}{\mu}\binom{j}{\nu}\mu !\nu !\left(-\frac{1}{2}\right)^{2j-\mu-\nu}\left(\log \frac{T}{2\pi} \right)^{2j-\mu-\nu}\sum_{g=1}^{k}\frac{\left(\frac{T}{2\pi} \right)^{z_g-1}}{(z_g-1)^{\mu+\nu+2}} \\
&\quad +O_{j,k}\left(U(\log T)^{2j+1} \right) \\
&=(-1)^{j+1}(k+1)\frac{U}{2\pi}\left(\log \frac{T}{2\pi} \right)^{2j+2}\sum_{\mu=0}^{j}\sum_{\nu=0}^{j}\binom{j}{\mu}\binom{j}{\nu}\frac{\mu !\nu !}{(\mu+\nu+2)!}\left(-\frac{1}{2} \right)^{2j-\mu-\nu} \\
&\quad +(-1)^{j+1}\frac{U}{2\pi}\left(\frac{1}{2}\log \frac{T}{2\pi} \right)^{2j+2}\sum_{\mu=0}^{j}\sum_{\nu=0}^{j}\binom{j}{\mu}\binom{j}{\nu}\mu !\nu !\sum_{g=1}^{k}\frac{1}{\theta_g^{\mu+\nu+2}}\sum_{\lambda=0}^{\mu+\nu+1}\frac{(-2\theta_g)^{\lambda}}{\lambda !} \\
&\quad +(-1)^{j}\frac{U}{2\pi}\left(\frac{1}{2}\log \frac{T}{2\pi} \right)^{2j+2}\sum_{\mu=0}^{j}\sum_{\nu=0}^{j}\binom{j}{\mu}\binom{j}{\nu}\mu !\nu !\sum_{g=1}^{k}\frac{\left(\frac{T}{2\pi} \right)^{z_g-1}}{\theta_g^{\mu+\nu+2}} \\
&\quad+O_{j,k}\left(U(\log T)^{2j+1} \right).
\end{align*}
As for the first term,
\begin{align*}
&\quad
\sum_{\mu=0}^{j}\sum_{\nu=0}^{j}\binom{j}{\mu}\binom{j}{\nu}\frac{\mu !\nu !}{(\mu+\nu+2)!}\left(-\frac{1}{2} \right)^{2j-\mu-\nu} \\
&=
(j!)^2\sum_{\mu=0}^{j}\sum_{\nu=0}^{j}\frac{1}{(j-\mu)!(j-\nu)!(\mu+\nu+2)!}\left(-\frac{1}{2} \right)^{2j-\mu-\nu} \\
&=
(j!)^2\sum_{\mu=0}^{j}\sum_{\nu=0}^{j}\frac{1}{\mu !\nu !(2j+2-\mu-\nu)!}\left(-\frac{1}{2} \right)^{\mu+\nu} \\
&=
\frac{(j!)^2}{(2j+2)!}\sum_{\mu=0}^{j}\binom{2j+2}{\mu}\left(-\frac{1}{2}\right)^{\mu}\sum_{\nu=0}^{j}\binom{2j+2-\mu}{\nu}\left(-\frac{1}{2}\right)^{\nu}.
\end{align*}
Here we note that
\begin{align*}
0
&=
\left(1-\frac{1}{2}-\frac{1}{2} \right)^{2j+2}=\sum_{0\leq \mu+\nu\leq 2j+2}\frac{(2j+2)!}{\mu !\nu !(2j+2-\mu-\nu)!}\left(-\frac{1}{2} \right)^{\mu+\nu} \\
&=
\sum_{\mu=0}^{2j+2}\binom{2j+2}{\mu}\left(-\frac{1}{2}\right)^{\mu}\sum_{\nu=0}^{2j+2-\mu}\binom{2j+2-\mu}{\nu}\left(-\frac{1}{2}\right)^{\nu}.
\end{align*}
Thus we have
\begin{align*}
&\quad
\sum_{\mu=0}^{j}\binom{2j+2}{\mu}\left(-\frac{1}{2}\right)^{\mu}\sum_{\nu=0}^{j}\binom{2j+2-\mu}{\nu}\left(-\frac{1}{2}\right)^{\nu} \\
&=
\left(1-\frac{1}{2}-\frac{1}{2} \right)^{2j+2} \\
&\quad
-2\sum_{\mu=j+1}^{2j+2}\binom{2j+2}{\mu}\left(-\frac{1}{2}\right)^{\mu}\sum_{\nu=0}^{2j+2-\mu}\binom{2j+2-\mu}{\nu}\left(-\frac{1}{2}\right)^{\nu} \\
&\quad
+\binom{2j+2}{j+1}\left(-\frac{1}{2}\right)^{2j+2} \\
&=
-2\sum_{\mu=j+1}^{2j+2}\binom{2j+2}{\mu}\left(-\frac{1}{2}\right)^{\mu}\left(\frac{1}{2} \right)^{2j+2-\mu}+\binom{2j+2}{j+1}\left(-\frac{1}{2}\right)^{2j+2} \\
&=
-\frac{1}{2^{2j+2}}\left(2\sum_{\mu=j+1}^{2j+2}\binom{2j+2}{\mu}(-1)^{\mu}-\binom{2j+2}{j+1}\right) \\
&=
\frac{1}{2^{2j+2}}\left(2\sum_{\mu=0}^{j}\binom{2j+2}{\mu}(-1)^{\mu}+\binom{2j+2}{j+1}\right).
\end{align*}
The sum is the coefficient of $x^j$ in $(1-x)^{2j+2}(1-x)^{-1}$.
Thus we can see that
\begin{align*}
&\quad
\sum_{\mu=0}^{j}\sum_{\nu=0}^{j}\binom{j}{\mu}\binom{j}{\nu}\frac{\mu !\nu !}{(\mu+\nu+2)!}\left(-\frac{1}{2} \right)^{2j-\mu-\nu} \\
&=
\frac{(j!)^2}{2^{2j+2}(2j+2)!}\left(2\binom{2j+1}{j}(-1)^j+\binom{2j+2}{j+1}\right) \\
&=
\frac{1+(-1)^j }{2^{2j+2}(j+1)^2}.
\end{align*}

On the second term, putting $u=\mu+\nu+1-\lambda$ and dividing the sum to four parts according as the conditions $u=0,\ 1\leq u \leq j$ with $0\leq i\leq u-1$, $1\leq u \leq j$ with $u\leq i\leq j$ and $j+1\leq u\leq 2j+1$, we have

\begin{align*}
&\quad \sum_{\mu=0}^{j}\sum_{\nu=0}^{j}\binom{j}{\mu}\binom{j}{\nu}\mu !\nu !\sum_{g=1}^{k}\frac{1}{\theta_g^{\mu+\nu+2}}\sum_{\lambda=0}^{\mu+\nu+1}\frac{(-2\theta_g)^{\lambda}}{\lambda !} \\
&=\sum_{g=1}^{k}\sum_{\mu=0}^{j}\sum_{\nu=0}^{j}\sum_{u=0}^{\mu+\nu+1}\binom{j}{\mu}\binom{j}{\nu}\mu !\nu !\frac{1}{\theta_g^{\mu+\nu+2}}\frac{(-2\theta_g)^{\mu+\nu+1-u}}{(\mu+\nu+1-u)!} \\
&=\sum_{g=1}^{k}\frac{1}{\theta_g}\sum_{\mu=0}^{j}\sum_{\nu=0}^{j}\binom{j}{\mu}\binom{j}{\nu}\mu !\nu !\frac{(-2)^{\mu+\nu+1}}{(\mu+\nu+1)!}\\
&\quad+\sum_{g=1}^{k}\sum_{u=1}^{j}\frac{1}{\theta_g^{u+1}}\sum_{\mu=0}^{u-1}\sum_{\nu=u-1-i}^{j}\binom{j}{i}\binom{j}{h}\mu !\nu !\frac{(-2)^{\mu+\nu+1-u}}{(\mu+\nu+1-u)!} \\
&\quad+\sum_{g=1}^{k}\sum_{u=1}^{j}\frac{1}{\theta_g^{u+1}}\sum_{\mu=u}^{j}\sum_{\nu=0}^{j}\binom{j}{\mu}\binom{j}{\nu}\mu !\nu !\frac{(-2)^{\mu+\nu+1-u}}{(\mu+\nu+1-u)!} \\
&\quad+\sum_{g=1}^{k}\sum_{u=j+1}^{2j+1}\frac{1}{\theta_g^{u+1}}\sum_{\mu=u-1-j}^{j}\sum_{\nu=u-1-\mu}^{j}\binom{j}{\mu}\binom{j}{\mu}\mu !\nu !\frac{(-2)^{\mu+\nu+1-u}}{(\mu+\nu+1-u)!} \\
&=S_1+S_2+S_3+S_4,
\end{align*}
say.

To calculate these sums, we prepare a lemma on combinatorics.
\begin{lemma}\label{combi}
For non-negative integers $j$ and $u \ (j\geq u)$,
\begin{align*}
&\quad \sum_{\mu=0}^{j-u}\binom{2j+1-u}{\mu}\sum_{\nu=0}^{j}\binom{2j+1-u-\mu}{\nu}(-2)^{2j+1-u-\mu-\nu}\\
&=(-1)^{j+1}\binom{2j-u}{j}\{ 1+(-1)^{-u}\}.
\end{align*}
\end{lemma}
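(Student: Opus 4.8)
The plan is to abbreviate $N=2j+1-u$ (so that $N\ge j+1\ge 1$, since $j\ge u\ge 0$) and to reduce the whole double sum to one elementary fact, namely the partial alternating binomial identity
\begin{equation*}
\sum_{\mu=0}^{m}\binom{N}{\mu}(-1)^{\mu}=(-1)^{m}\binom{N-1}{m}.
\end{equation*}
I would record and prove this first, in one line by telescoping: substituting $\binom{N}{\mu}=\binom{N-1}{\mu}+\binom{N-1}{\mu-1}$ makes the partial sum collapse to its last term. This is the only external ingredient needed.

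First I would evaluate the inner sum over $\nu$. Writing $M=N-\mu$ and completing the sum by the binomial theorem gives $\sum_{\nu=0}^{M}\binom{M}{\nu}(-2)^{M-\nu}=(1-2)^{M}=(-1)^{M}$, while the discarded tail $\sum_{\nu=j+1}^{M}\binom{M}{\nu}(-2)^{M-\nu}$, after the reflection $\nu\mapsto M-\rho$, becomes $\sum_{\rho=0}^{\,j-u-\mu}\binom{N-\mu}{\rho}(-2)^{\rho}$. Here one checks $M\ge j+1$, so the truncation at $\nu=j$ is genuine and the reflected upper limit is exactly $M-j-1=j-u-\mu$. Hence the inner sum equals $(-1)^{N-\mu}-\sum_{\rho=0}^{\,j-u-\mu}\binom{N-\mu}{\rho}(-2)^{\rho}$; multiplying by $\binom{N}{\mu}$ and summing over $0\le\mu\le j-u$ splits the left-hand side as $S=S_{\mathrm{main}}-S_{\mathrm{tail}}$.

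Both pieces then succumb to the same identity. For $S_{\mathrm{main}}=\sum_{\mu=0}^{j-u}\binom{N}{\mu}(-1)^{N-\mu}$ I would factor out $(-1)^{N}$ and apply the identity with $m=j-u$, obtaining $(-1)^{j+1}\binom{2j-u}{j}$ after simplifying the sign $(-1)^{N}(-1)^{j-u}=(-1)^{3j+1-2u}=(-1)^{j+1}$ and using $\binom{2j-u}{j-u}=\binom{2j-u}{j}$. For $S_{\mathrm{tail}}$ I would collapse the double sum along the anti-diagonal: setting $s=\mu+\rho$ and using $\binom{N}{\mu}\binom{N-\mu}{\rho}=\binom{N}{s}\binom{s}{\mu}$, the binomial theorem gives $\sum_{\mu=0}^{s}\binom{s}{\mu}(-2)^{s-\mu}=(-1)^{s}$, so that $S_{\mathrm{tail}}=\sum_{s=0}^{j-u}\binom{N}{s}(-1)^{s}=(-1)^{j-u}\binom{2j-u}{j}$, once more by the identity. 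Combining the two gives $S=\binom{2j-u}{j}\{(-1)^{j+1}-(-1)^{j-u}\}$, and since $-(-1)^{j-u}=(-1)^{j+1}(-1)^{-u}$, this is precisely $(-1)^{j+1}\binom{2j-u}{j}\{1+(-1)^{-u}\}$.

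I expect no serious obstacle: the computation is essentially mechanical once the reflection and the anti-diagonal collapse are in place. The only points demanding care are the bookkeeping of the truncation ranges — verifying $M\ge j+1$ so the reflection lands exactly on $0\le\rho\le j-u-\mu$ — and tracking the parity factors so that $S_{\mathrm{main}}$ and $S_{\mathrm{tail}}$ combine into the factor $1+(-1)^{-u}$ rather than cancelling. These two manipulations are the only non-routine steps.
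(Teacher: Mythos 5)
Your proof is correct; the two delicate points you flagged (the range check $M\ge j+1$ that makes the reflected tail land on $0\le\rho\le j-u-\mu$, and the final parity bookkeeping) are both handled properly. It shares the paper's two basic ingredients — completing binomial sums so that $(1-2)^{M}=(-1)^{M}$ applies, and the alternating partial-sum identity $\sum_{\mu=0}^{m}\binom{N}{\mu}(-1)^{\mu}=(-1)^{m}\binom{N-1}{m}$ — but organizes the computation genuinely differently. The paper makes a single decomposition of the full triangular region: it expands $0=(1+1-2)^{2j+1-u}$ as the double sum over $\mu+\nu\le 2j+1-u$ and subtracts the two wings $\mu\ge j-u+1$ and $\nu\ge j+1$, which are disjoint (both conditions together would force $\mu+\nu\ge 2j+2-u$); each wing then has a \emph{complete} inner sum collapsing to a sign, and after reflecting indices one lands directly on two alternating partial sums, which the paper evaluates as the coefficients of $x^{j}$ and $x^{j-u}$ in $(1-x)^{2j+1-u}(1-x)^{-1}=(1-x)^{2j-u}$. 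You instead complete only the inner $\nu$-sum and reflect its tail, which leaves a residual double sum over the triangle $\mu+\rho\le j-u$; dispatching it costs you an extra device the paper never needs, namely $\binom{N}{\mu}\binom{N-\mu}{\rho}=\binom{N}{\mu+\rho}\binom{\mu+\rho}{\mu}$ together with the anti-diagonal resummation that collapses the triangle to $\sum_{s=0}^{j-u}\binom{N}{s}(-1)^{s}$. What your route buys is mechanics over insight: each stage is a routine completion or reflection rather than a one-shot decomposition that must be spotted, and your telescoping proof of the partial-sum identity is more self-contained than the paper's generating-function appeal. The two routes reconcile transparently at the end: your two partial sums are both truncated at $j-u$ while the paper's are truncated at $j$ and $j-u$, and they agree via the symmetry $\binom{2j-u}{j}=\binom{2j-u}{j-u}$, which is a good consistency check on the signs.
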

\begin{proof}
Since
\begin{align*}
&\quad
(1+1-2)^{2j+1-u} \\
&=
\sum_{\mu=0}^{2j+1-u}\binom{2j+1-u}{\mu}\sum_{\nu=0}^{2j+1-u-\mu}\binom{2j+1-u-\mu}{\nu}(-2)^{2j+1-u-\mu-\nu},
\end{align*}
we have
\begin{align*}
&\quad \sum_{\mu=0}^{j-u}\binom{2j+1-u}{\mu}\sum_{\nu=0}^{j}\binom{2j+1-u-\mu}{\nu}(-2)^{2j+1-u-\mu-\nu}\\
&=(1+1-2)^{2j+1-u} \\
&\quad-\sum_{\mu=j-u+1}^{2j+1-u}\binom{2j+1-u}{\mu}\sum_{\nu=0}^{2j+1-u-\mu}\binom{2j+1-u-\mu}{\nu}(-2)^{2j+1-u-\mu-\nu}\\
&\quad-\sum_{\nu=j+1}^{2j+1-u}\binom{2j+1-u}{\nu}\sum_{\mu=0}^{2j+1-u-\nu}\binom{2j+1-u-\nu}{\mu}(-2)^{2j+1-u-\nu-\mu}\\
&=-\sum_{\mu=j-u+1}^{2j+1-u}\binom{2j+1-u}{\mu}(-1)^{2j+1-u-\mu}-\sum_{\nu=j+1}^{2j+1-u}\binom{2j+1-u}{\nu}(-1)^{2j+1-u-\nu}\\
&=-\sum_{\mu=0}^{j}\binom{2j+1-u}{\mu}(-1)^{\mu}-\sum_{\nu=0}^{j-u}\binom{2j+1-u}{\nu}(-1)^{\nu}.
\end{align*}
These sums are coefficients of $x^{j}$ and $x^{j-u}$ in $(1-x)^{2j+1-u}(1-x)^{-1}$ and are therefore equal to the coefficient
of $x^{j}$ and $x^{j-u}$ in $(1-x)^{2j-u}$. Thus we obtain
\begin{align*}
&\quad \sum_{\mu=0}^{j-u}\binom{2j+1-u}{\mu}\sum_{\nu=0}^{j}\binom{2j+1-u-\mu}{\nu}(-2)^{2j+1-u-\mu-\nu}\\
&=(-1)^{j+1}\binom{2j-u}{j}+(-1)^{j+1-u}\binom{2j-u}{j-u} \\
&=(-1)^{j+1}\binom{2j-u}{j}\{1+(-1)^{-u}\}.
\end{align*}
\end{proof}

By Lemma \ref{combi} with $u=0$, when $k\neq 0$,
\begin{align*}
S_1
&=(j!)^2\sum_{g=1}^{k}\frac{1}{\theta_g}\sum_{\mu=0}^{j}\sum_{\nu=0}^{j}\frac{(-2)^{2j+1-i-h}}{\mu !\nu !(2j+1-\mu-\nu)!}\\
&=\frac{(j!)^2}{(2j+1)!}\sum_{g=1}^{k}\frac{1}{\theta_g}\sum_{\mu=0}^{j}\binom{2j+1}{\mu}\sum_{\nu=0}^{j}\binom{2j+1-\mu}{\nu}(-2)^{2j+1-\mu-\nu}\\
&=(-1)^{j+1}2\frac{(j!)^2}{(2j+1)!}\binom{2j}{j}\sum_{g=1}^{k}\frac{1}{\theta_g} \\
&=(-1)^{j+1}\frac{2}{2j+1}\sum_{g=1}^{k}\frac{1}{\theta_g}=(-1)^j\frac{2}{2j+1}.
\end{align*}
At the last equality, we use the fact
\begin{equation*}
\sum_{g=1}^{k}\frac{1}{\theta_g}=-1.
\end{equation*}
This can be obtained by the Newton-Girard formulas.
We note that if $k=0$, then $S_1=0$.

On $S_2$, recalling the proof of Lemma \ref{combi}, we see that
\begin{align*}
S_2
&=\sum_{u=1}^{j}(j!)^2\sum_{g=1}^{k}\frac{1}{\theta_g^{u+1}}\sum_{\mu=0}^{u-1}\frac{1}{(j-\mu)!}\sum_{\nu=0}^{j+\mu+1-u}\frac{(-2)^{\nu}}{\nu !(j+\mu+1-u-\nu)!}\\
&=\sum_{u=1}^{j}\frac{(j!)^2}{(2j+1-u)!}\sum_{g=1}^{k}\frac{1}{\theta_g^{u+1}}\sum_{\mu=0}^{u-1}\binom{2j+1-u}{\mu+j+1-u}\sum_{\nu=0}^{\mu+j+1-u}\binom{\mu+j+1-u}{\nu}(-2)^{\nu}\\
&=\sum_{u=1}^{j}\frac{(j!)^2}{(2j+1-u)!}\sum_{g=1}^{k}\frac{1}{\theta_g^{u+1}}\sum_{\mu=0}^{u-1}\binom{2j+1-u}{\mu+j+1-u}(-1)^{\mu+j+1-u}\\
&=\sum_{u=1}^{j}\frac{(j!)^2}{(2j+1-u)!}\sum_{g=1}^{k}\frac{1}{\theta_g^{u+1}}\sum_{\mu=j+1-u}^{j}\binom{2j+1-u}{\mu}(-1)^{\mu}\\
&=\sum_{u=1}^{j}\frac{(j!)^2}{(2j+1-u)!}\sum_{g=1}^{k}\frac{1}{\theta_g^{u+1}}\left\{(-1)^j\binom{2j-u}{j}-(-1)^{j-u}\binom{2j-u}{j-u} \right\}\\
&=(-1)^j\sum_{u=1}^{j}\frac{j!}{2j+1-u}\frac{1}{(j-u)!}\{ 1-(-1)^{-u}\}\sum_{g=1}^{k}\frac{1}{\theta_g^{u+1}}.
\end{align*}

By Lemma \ref{combi},
\begin{align*}
S_3
&=\sum_{u=1}^{j}(j!)^2\sum_{g=1}^{k}\frac{1}{\theta_g^{u+1}}\sum_{\mu=u}^{j}\sum_{\nu=0}^{j}\frac{1}{(j-\mu)!(j+1-u+\mu-\nu)!}\frac{(-2)^{j+1-u+\mu-\nu}}{\nu !}\\
&=\sum_{u=1}^{j}(j!)^2\sum_{g=1}^{k}\frac{1}{\theta_g^{u+1}}\sum_{\mu=0}^{j-u}\frac{1}{\mu !}\sum_{\nu=0}^{j}\frac{(-2)^{2j+1-u-\mu-\nu}}{\nu !(2j+1-u-\mu-\nu)!}\\
&=\sum_{u=1}^{j}\frac{(j!)^2}{(2j+1-u)!}\sum_{g=1}^{k}\frac{1}{\theta_g^{u+1}}\sum_{\mu=0}^{j-u}\binom{2j+1-u}{\mu}\sum_{\nu=0}^{j}\binom{2j+1-u-\mu}{\nu}(-2)^{2j+1-u-\mu-\nu}\\
&=(-1)^{j+1}\sum_{u=1}^{j}\frac{1}{2j+1-u}\frac{j!}{(j-u)!}\{ 1+(-1)^{-u}\}\sum_{g=1}^{k}\frac{1}{\theta_g^{u+1}}.
\end{align*}

Since
\begin{equation*}
\sum_{\mu=0}^{2j+1-u}\binom{2j+1-u}{\mu}(-1)^{\mu}
=
\begin{cases}
1 & u=2j+1, \\
0 & otherwise,
\end{cases}
\end{equation*}

\begin{align*}
S_4
&=\sum_{u=j+1}^{2j+1}(j!)^2\sum_{g=1}^{k}\frac{1}{\theta_g^{u+1}}\sum_{\mu=u-1-j}^{j}\frac{1}{(j-\mu)!}\sum_{\nu=0}^{j+\mu+1-u}\frac{(-2)^{\nu}}{\nu !(j+\mu+1-u-\nu)!}\\
&=\sum_{u=j+1}^{2j+1}(j!)^2\sum_{g=1}^{k}\frac{1}{\theta_g^{u+1}}\sum_{\mu=u-1-j}^{j}\frac{1}{(j-\mu)!(j+\mu+1-u)!}\sum_{\nu=0}^{j+\mu+1-u}\binom{j+\mu+1-u}{\nu}(-2)^{\nu}\\
&=\sum_{u=j+1}^{2j+1}\frac{(j!)^2}{(2j+1-u)!}\sum_{g=1}^{k}\frac{1}{\theta_g^{u+1}}\sum_{\mu=0}^{2j+1-u}\binom{2j+1-u}{\mu}(-1)^{\mu}=\sum_{g=1}^{k}\frac{(j!)^2}{\theta_g^{2j+2}}.
\end{align*}

Thus we have
\begin{align*}
&\quad
S_1+S_2+S_3+S_4\\
&=(-1)^j\frac{2}{2j+1}+(-1)^{j+1}2\sum_{u=1}^{j}\frac{1}{2j+1-u}\frac{j!}{(j-u)!}(-1)^{-u}\sum_{g=1}^{k}\frac{1}{\theta_g^{u+1}}+\sum_{g=1}^{k}\frac{(j!)^2}{\theta_g^{2j+2}}.
\end{align*}

On the third term, we see that
\begin{align*}
\sum_{\mu=0}^{j}\sum_{\nu=0}^{j}\binom{j}{\mu}\binom{j}{\nu}\mu !\nu !\sum_{g=1}^{k}\frac{\left(\frac{T}{2\pi} \right)^{z_g-1}}{\theta_g^{\mu+\nu+2}}=(j!)^2\sum_{g=1}^{k}\frac{\left(\frac{T}{2\pi}\right)^{z_g-1}}{\theta_g^{2j+2}} \left(\sum_{\mu=0}^{j}\frac{\theta_g^{\mu}}{\mu !} \right)^2.
\end{align*}

Therefore, when $k\neq 0$,
\begin{align*}
I_1
&=-\frac{(k+1)\{1+(-1)^j \}}{2^{2j+2}(j+1)^2}\frac{U}{2\pi}\left(\log \frac{T}{2\pi} \right)^{2j+2} \\
&\quad -\frac{U}{2^{2j+2}(2j+1)\pi}\left(\log \frac{T}{2\pi} \right)^{2j+2} \\
&\quad +\sum_{u=1}^{j}\frac{1}{2j+1-u}\frac{j!}{(j-u)!}(-1)^{-u}\sum_{g=1}^{k}\frac{1}{\theta_g^{u+1}}\frac{U}{2^{2j+2}\pi}\left(\log \frac{T}{2\pi} \right)^{2j+2} \\
&\quad -(-1)^{j}\sum_{g=1}^{k}\frac{(j!)^2}{\theta_g^{2j+2}}\frac{U}{2^{2j+3}\pi}\left(\log \frac{T}{2\pi} \right)^{2j+2} \\
&\quad +(-1)^{j}(j!)^2\sum_{g=1}^{k}\frac{\left(\frac{T}{2\pi}\right)^{z_g-1}}{\theta_g^{2j+2}} \left(\sum_{\mu=0}^{j}\frac{\theta_g^{\mu}}{\mu !} \right)^2\frac{U}{2^{2j+3}\pi}\left(\log \frac{T}{2\pi} \right)^{2j+2} \\
&\quad+O_{j,k}\left(U(\log T)^{2j+1} \right).
\end{align*}
If $k=0$, then these main terms vanish except for the first.

\subsection*{Step 5}
Finally, we obtain
\begin{align*}
&\qquad
M(T+U)-M(T)\\
&=
\delta_{0,k}\frac{U}{2^{2j+1}(2j+1)\pi}\left(\log \frac{T}{2\pi}\right)^{2j+2} \\
&\quad-\frac{(k+1)\{1+(-1)^j \}}{2^{2j+1}(j+1)^2}\frac{U}{2\pi}\left(\log \frac{T}{2\pi} \right)^{2j+2} \\
&\quad +\sum_{u=1}^{j}\frac{1}{2j+1-u}\frac{j!}{(j-u)!}(-1)^{-u}\sum_{g=1}^{k}\frac{1}{\theta_g^{u+1}}\frac{U}{2^{2j+1}\pi}\left(\log \frac{T}{2\pi} \right)^{2j+2} \\
&\quad +(-1)^{j+1}\sum_{g=1}^{k}\frac{(j!)^2}{\theta_g^{2j+2}}\frac{U}{2^{2j+2}\pi}\left(\log \frac{T}{2\pi} \right)^{2j+2} \\
&\quad +(-1)^{j}(j!)^2\sum_{g=1}^{k}\frac{\left(\frac{T}{2\pi}\right)^{z_g-1}}{\theta_g^{2j+2}} \left(\sum_{\mu=0}^{j}\frac{\theta_g^{\mu}}{\mu !} \right)^2\frac{U}{2^{2j+2}\pi}\left(\log \frac{T}{2\pi} \right)^{2j+2} \\
&\quad+O_{j,k}\left(U(\log T)^{2j+1}\right)
\end{align*}
This completes the proof for the special $T$ which are chosen at the beginning of the proof.

To complete the proof, we take away the condition on $T$.
When $T$ increases continuously in bounded interval, the number of relevant $|Z^{(j)}(\gamma_k)|^2$ is at most $O_k(\log T)$ and the order is $O_j(T^{\varepsilon})$.
Thus it is smaller than the error in our main theorem that the contribution of these terms.
Thus the formula is true for all $T>T_0$.

\section*{Acknowledgement}
I would like to thank my supervisor Professor Kohji Matsumoto for useful advice.
I am grateful to the seminar members for some helpful remarks and discussions.

\end{document}